\newcommand{\comment}[1]
\newtheorem{df}{Definition}[section] 
\newtheorem{pro}{Proposition}[section]
\newtheorem{lm}{Lemma}[section]
\newtheorem{teo}{Theorem}[section]
\newtheorem{rem}{Remark}[section]
\newtheorem{cor}{Corollary}[section]
\newtheorem{ex}{Example}[section]
\newtheorem*{pr-no}{Proof}
\newcommand{\Dt}{\ensuremath{\bar{d}}}
\title{Properties of the Digital Root and its Extension to Rational Numbers - an Algebraic Approach}
\date{\today}
\author{
	Lucas T. Cardoso,\thanks{E-mail: \texttt{lucas.cardoso@ufsm.br}}\quad
	Glauber Quadros\thanks{
		E-mail: \texttt{glauber.quadros@ufsm.br}
	}\\[6pt]
	{\small Coordenadoria Acadêmica,
		Universidade Federal de Santa Maria,}\\
	{\small ZIP--CODE Cachoeira do Sul, RS, Brasil}\\[12pt]
}
\begin{document}
	
\providecommand{\sectionautorefname}{section}
\providecommand{\chapterautorefname}{chapter}
\renewcommand*{\sectionautorefname}{Section}
\renewcommand*{\subsectionautorefname}{Subsection}
\renewcommand*{\chapterautorefname}{Chapter}
	
\maketitle

\tableofcontents

\begin{abstract}
	This paper contains an algebraic constructive and self-contained account of the invariance rule of the digital root under division for an arbitrary natural basis representation.
	Both the cases of repeating and non-repeating fractionals are treated. In the preliminary section some known results such as the uniqueness in the representation of a fraction
	are discussed for both the finite and infinite bases cases. Simple examples are introduced throughout the text for illustrative purposes.
\end{abstract}

\section{Introduction}

It is a known fact that behind the easily comprehensible statements in Arithmetic very commonly lies a staggeringly complex solution, when it exists.  Questions such as Fermat's Last Theorem, Goldbach's conjecture, the twin prime conjecture, the odd perfect number conjecture and so many others can be easily understood by a layman. Nonetheless,  for the aforementioned conjectures the last three remain open to this day, the first mentioned being the only one that has been entirely solved by Andrew Wiles \cite{WilesFermat}. The mere study of these and other profound statements has produced new mathematical tools and entire subfields have emerged in Number Theory. Although seemingly simple, Arithmetic is complex and rich enough not to escape from Gödel's Incompleteness Theorem, which haunts Mathematicians to this very day. To quote the prince of Mathematics, K. F. Gauss, " Mathematics is the queen of the sciences and Arithmetic is the queen of Mathematics".

From a very early age we are educated mathematically through the decimal numeral system. This system is so firmly rooted within us that it may be already passing through genetically to our descendants. Although very practical, since we have ten fingers to count, the decimal numeral system is no more special than any other numeral system based on a different basis. In any case, there are some curious and interesting properties associated with the digit $9$, which are frequently used by magicians to fool their audiences into guessing which number one has picked by asking one to perform certain calculations with the chosen number. An amazing example that could let people stumped is by asking them to find the digit $A$ such that $2A99561=[3(523+A)]^2$. At first it seems an impossible feature, unless the person notices that the number on the right side of the equation is divisible by 9, therefore the number on the left is also divisible by $9$ and so is the sum of digits of the corresponding number, that is $2+A+9+9+5+6+1=32+A$ and therefore, $A=4$. This and many other curiosities regarding the digit $9$ are humorously presented in Beiler's book \cite{Beiler}. Essentially, the same role played by the digit $9$ in the decimal basis representation is played by the digit $k-1$ in the representation to the basis $k$. 

This paper was designed to be the more self-contained it could be in order to provide the reader with a smooth ride through the entire text, but without attempting to reinvent the wheel. With that in mind, \autoref{preliminaries}, which is divided into four subsections, intends to lay down a complete background for the comprehension of the two main theorems, \autoref{main1} and \autoref{main2}. Although most of the results in this preliminary section are well-known in the literature, we insisted on presenting them with proofs, remarks and examples, not as a pedantic act, but as a pedagogical artifice for a constructive and progressive understanding of the tools that are going to be explored in the main sections. In addition, still regarding the preliminary section, some of the results are exhibited in a unique fashion, which in and of itself makes it worth reading. Instead of describing some properties using purely calculational methods, as achieved by \cite{Izmirli} and \cite{Costa}, we have explored an algebraic approach to digital roots. The first subsection introduces some basic nomenclatures and definitions. The main purpose of the second subsection, besides presenting some common notions on modular arithmetic, is the partitioning of the ring of integers modulo $n$ by orbits obtained from the left action on it of its multiplicative subgroup, which will play a vital role in the proof of \autoref{main1}. The third subsection develops both the finite and the infinite representation theorems to a fixed base $k\in\mathbb{N}$, where the latter is going to be the starting point in the proof of \autoref{main2}. The fourth and final subsection introduces the notion of digital sum, additive persistence and digital root, which are the key elements for the attaining of our main results. \autoref{semdizima} is centered around  \autoref{main1}, which essentially establishes behaviour patterns regarding the digital root function to the base $k$, and its correlation to orbits, defined in \autoref{mod-aret}, in the ring of integers modulo $k$. \autoref{comdizima} treats the case of a repeating fractionals to the basis $k$, with similar patterns emerging from the repetend. The paper is finalized with a conclusion section.

\section{Preliminaries}\label{preliminaries}

This section presents most of the tools that are going to be used throughout this paper. As mentioned in the introduction section, a good part of the results presented here is treated in  classic books on Algebra and Number Theory such as \cite{Andrews}, \cite{Jacobson} and \cite{GarciaLequain}. However, we do not advise the reader to skip it. 

\subsection{Basic notions}\label{basicnotions}

Throughout this entire paper we shall use the following terminologies $\mathbb{N}=\{0,1,\cdots\}$, $\{0,1,\cdots,n\}=:\langle n\rangle$, and $\mathbb{N}_p:=\mathbb{N}\setminus\{0,1,\cdots,p-1\}=\mathbb{N}\setminus \langle p-1\rangle$. Given an arbitrary set $X$, we conveniently define $X^+:=\{x\in X: x\geq0\}$. Moreover, the symbols $\lfloor\cdot\rfloor$ and $\lceil\cdot\rceil$ represent the floor and the ceiling functions, respectively, where $\lfloor\cdot\rfloor:D
\subseteq\mathbb{R}\rightarrow\mathbb{Z}$ with $\lfloor x\rfloor=\max\{m\in\mathbb{Z}: m\leq x\}$, and $\lceil\cdot\rceil:D
\subseteq\mathbb{R}\rightarrow\mathbb{Z}$ with $\lceil x\rceil=\min\{m\in\mathbb{Z}: m\geq x\}$. 
 

 
\color{black}
\begin{df}
	Let $a,b,c\in\mathbb{Z}$ and $c\neq 0$, then we say $a$ is congruent to $b$ modulo $c$, symbolically $a\equiv b \ (\text{mod}\ c)$, if $(a-b)/c\in\mathbb{Z}$, or equivalently, $c|(a-b)$.
\end{df}

\begin{ex}
	Since $(16-4)/3=4\in\mathbb{Z}$, then $16\equiv 4 \ (\text{mod}\ 3)$. On the other hand, $(16-5)/3=11/3\notin\mathbb{Z}$, which implies $16\not\equiv  5 \ (\text{mod}\ 3)$.
\end{ex}

\begin{df}
	Two numbers $a,b\in\mathbb{Z}$ are said to be \textbf{coprime} or \textbf{relatively prime} if $\text{gcd} \ (a,b)=1$. 
\end{df}

\begin{lm}\label{lema}
	Let $a,b,q\in\mathbb{Z}$ such that $ab\equiv 0 \ (mod \ q)$ and $a$ and $q$ are coprime, then $b\equiv 0 \ (mod \ q)$.
\end{lm}  

\begin{proof}
	From the first condition of the hypothesis, $\exists k\in\mathbb{Z}$ such that $ab=kq$, which implies $q \mid ab$. Since from the second hypothesis, $\text{gcd} \ (a,q)=1$, we must have $q \mid b$, that is, $b\equiv 0 \ (mod \ q)$. 
\end{proof}



\begin{lm}\label{kk-1}
	Let $k\in\mathbb{N}_2$, then $k$ and $k-1$ are coprime, that is, $\text{gcd} \ (k,k-1)=1$.
\end{lm} 


\begin{proof}
	Let $k=k_1^{l_1}\cdots k_d^{l_d}$ and $k-1=q_1^{s_1}\cdots q_e^{s_e}=:q$ be the prime factorizations of $k$ and $k-1$, respectively. Suppose $\exists m>1$ such that $\text{gcd} \ (k,k-1)=m$, then $\exists q_{j_0}$, where $1\leq j_0\leq e$ such that $q_{j_0}|m$ and consequently $q_{j_0}|k_1^{l_1}\cdots k_d^{l_d}$. However, since $k_1^{l_1}\cdots k_d^{l_d}=q_1^{s_1}\cdots q_e^{s_e}+1$
	\begin{equation}
	\frac{k_1^{l_1}\cdots k_d^{l_d}}{q_{j_0}}=\left(q_1^{s_1}\cdots q_{j_0}^{s_{j_0}-1} \cdots q_e^{s_e}+\frac{1}{q_{j_0}}\right)\notin\mathbb{N},
	\end{equation}
	which is a contradiction, therefore $\text{gcd} \ (k,k-1)=1$. 
\end{proof}

\begin{rem}\label{gcdrk-1}
	Note that $\text{gcd} \ (k,k-1)=1 \iff \text{gcd} \ (r,k-1)=1$ where $r$ is any divisor of $k$. 
\end{rem}

\begin{df}\label{fractional}
	Let $q\in\mathbb{Q}$, we say $q$ is a \textbf{non-repeating or terminating fractional to the base $k\in\mathbb{N}_2$} if $\exists \rho\in\mathbb{N}$ such that $k^\rho q\in\mathbb{Z}$ and that $q$ is a \textbf{repeating fractional to the base $k$} otherwise. Moreover, we may define  $\rho_0:=\min \{\rho\in\mathbb{N}: k^\rho q\in\mathbb{\mathbb{Z}}\}$ which is called the \textbf{minimum exponent} of $q$ to the base $k$. 
\end{df}

\begin{rem}
	Since repeating and terminating fractionals dichotomize $\mathbb{Q}$, given $k\in\mathbb{N}_2$, we may write $\mathbb{Q}=\mathbb{Q}_{T_k}\dot\cup\mathbb{Q}_{R_k}$, where $\mathbb{Q}_{T_k}$ and $\mathbb{Q}_{R_k}$ represent the set of terminating and repeating fractionals to the base $k$, respectively. Notice also that since for every basis $k\in\mathbb{N}_2$, the minimum exponent for an integer is $\rho_0=0$, then $\forall k\in\mathbb{N}_2, \ \mathbb{Z}\subset\mathbb{Q}_{T_k}$.
\end{rem}

\begin{lm}\label{teofraciff}
	Let $n,m\in\mathbb{N}_1$ be coprime numbers and $k\in\mathbb{N}_2$, then $n/m\in\mathbb{Q}_{T_k}$ if and only if $m=k_1^{l_1}k_2^{l_2}\cdots k_d^{l_d}$, where $l_i\geq 0$ for $1\leq i\leq d$ and $k_1,k_2,\cdots, k_d$ are the $d$ prime divisors of $k$.
\end{lm}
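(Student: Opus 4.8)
The plan is to unwind the definition of a terminating fractional and reduce the membership $n/m \in \mathbb{Q}_{T_k}$ to a purely multiplicative statement about $m$, exploiting the coprimality of $n$ and $m$ to discard $n$ entirely. First I would note that, by \autoref{fractional}, $n/m \in \mathbb{Q}_{T_k}$ holds precisely when there exists $\rho \in \mathbb{N}$ with $k^\rho (n/m) \in \mathbb{Z}$, which is the same as the divisibility $m \mid k^\rho n$. Since $\gcd(n,m)=1$, I would then apply \autoref{lema} with $a=n$, $b=k^\rho$ and $q=m$: from $m \mid n k^\rho$ together with $\gcd(n,m)=1$ it follows that $m \mid k^\rho$, and the reverse implication $m\mid k^\rho \Rightarrow m \mid k^\rho n$ is immediate. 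Hence the condition $n/m \in \mathbb{Q}_{T_k}$ is equivalent to the existence of some $\rho \in \mathbb{N}$ with $m \mid k^\rho$, and it remains only to characterize this last condition in terms of the prime factorization of $m$.

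For the forward direction, writing $k = k_1^{e_1}\cdots k_d^{e_d}$ for the prime factorization of $k$, I would argue that if $m \mid k^\rho$ then every prime $p$ dividing $m$ also divides $k^\rho$, and since $p$ is prime this forces $p \mid k$, so $p \in \{k_1,\dots,k_d\}$. Invoking uniqueness of prime factorization, $m$ can involve no primes outside $\{k_1,\dots,k_d\}$, which yields exactly the claimed form $m = k_1^{l_1}\cdots k_d^{l_d}$ with each $l_i \geq 0$ (the nonnegative rather than strictly positive exponents accounting for prime divisors of $k$ that happen to be absent from $m$). For the converse, given $m = k_1^{l_1}\cdots k_d^{l_d}$, I would simply set $\rho := \max\{l_1,\dots,l_d\}$ and observe that, because each $e_i \geq 1$, the exponent of $k_i$ in $k^\rho$ is $\rho e_i \geq \rho \geq l_i$, so $m \mid k^\rho$ and therefore $n/m \in \mathbb{Q}_{T_k}$.

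I expect no serious obstacle here: the entire argument rests on unique factorization and the single application of \autoref{lema} that strips the coprime factor $n$ from the divisibility. The only point demanding care is this coprimality reduction, since it is precisely the hypothesis $\gcd(n,m)=1$ that makes the characterization depend on $m$ alone and not on $n$; everything after that is routine bookkeeping with prime exponents. It would also be worth remarking, in passing, that the construction in the converse exhibits an explicit $\rho$ witnessing termination, which connects naturally to the minimum exponent $\rho_0$ introduced in \autoref{fractional}.
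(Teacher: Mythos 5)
Your proposal is correct and takes essentially the same route as the paper: both reduce membership in $\mathbb{Q}_{T_k}$ to the divisibility $m \mid k^\rho$ (you make explicit, via \autoref{lema}, the coprimality step that the paper only asserts in passing with ``$m$ must divide $k^\rho$'') and then settle that condition by unique factorization. Your witness $\rho = \max\{l_1,\dots,l_d\}$ in the converse is a slightly cruder but equally valid choice compared with the paper's $\rho \geq \max_{1\leq i\leq d}\left\{\left\lceil l_i/l'_i\right\rceil\right\}$.
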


\begin{proof}
	Suppose first that $m=k_1^{l_1}k_2^{l_2}\cdots k_d^{l_d}$, then 
	\begin{equation}
	\frac{1}{m}=\frac{1}{k_1^{l_1}k_2^{l_2}\cdots k_d^{l_d}}
	\end{equation} 
	Since $k_1,k_2,\cdots,k_d$ are the prime divisors of $k$, then by the Fundamental Theorem of Arithmetic $\exists l'_1,l'_2,\cdots, l'_d\in\mathbb{N}$ such that $k=k_1^{l'_1}k_2^{l'_2}\cdots k_d^{l'_d}$. Therefore,
	\begin{equation}
	\frac{k^\rho}{m}=\frac{\left(k_1^{l'_1}k_2^{l'_2}\cdots k_d^{l'_d}\right)^\rho}{k_1^{l_1}k_2^{l_2}\cdots k_d^{l_d}}=k_1^{\rho l'_1-l_1}k_2^{\rho l'_2-l_2}\cdots k_d^{\rho l'_d-l_d}.
	\end{equation}
	Thus, choosing
	\begin{equation}
	\rho\geq \underset{1 \leq i \leq d}{\max}\left\{\left\lceil\frac{l_i}{l'_i}\right\rceil\right\},
	\end{equation}
	we have 
	\begin{equation}
	k^\rho\frac{n}{m}\in\mathbb{N},
	\end{equation}
	and consequently, by definition, $n/m$ is a terminating fractional to the base $k$. Conversely, suppose $n/m\in\mathbb{Q}_{T_k}$, then by \autoref{fractional}, $\exists \rho\in\mathbb{N}$ such that $k^\rho(\frac{n}{m})\in\mathbb{N}_1$ and since $\text{gcd} \ (n,m)=1$ and $m$ must divide $k^\rho$, $\exists l_1,l_2,\cdots, l_d\in\mathbb{N}$ such that $m=k_1^{l_1}k_2^{l_2}\cdots k_d^{l_d}$, where $l_i\geq 0$ for $1\leq i\leq d$ and $k_1,k_2,\cdots, k_d$ are the $d$ prime divisors of $k$.
	
\end{proof}

\subsection{Modular Arithmetic}\label{mod-aret}

Since the congruence relation is an equivalence relation on $\mathbb{Z}$, we may define the \textbf{congruence classes modulo $n$} or \textbf{residue classes modulo $n$} as
\begin{equation}
\overline{x}:=\{y\in\mathbb{Z}: \ x\equiv y \ (mod \ n)\}=\{x+jn\}_{j\in\mathbb{Z}}.
\end{equation}
From this definition we can introduce the set of all congruence classes modulo $n$, i.e. $\mathbb{Z}_n:=\{\overline{0},\overline{1},\cdots,\overline{n-1}\}$. The operations of addition and multiplication on $\mathbb{Z}$ induce the following operations on the set $\mathbb{Z}_n$:
\begin{align*}
\oplus: \mathbb{Z}_n\times\mathbb{Z}_n&\longrightarrow\mathbb{Z}_n\\
(\overline{x},\overline{y})&\longmapsto\overline{x+y}\\
\odot: \mathbb{Z}_n\times\mathbb{Z}_n&\longrightarrow\mathbb{Z}_n
\end{align*}
It is easy to check that $(\mathbb{Z}_n,\oplus,\odot)$ is a ring called the \textbf{ring of integers modulo $n$}. Moreover, let
\begin{equation}\label{gama1n}
((\mathbb{Z}_n)^*,\odot):=\{\overline{\delta}\in\mathbb{Z}_n : \text{gcd} \ (\delta,n)=1\}=:\Gamma_1^n.
\end{equation}
The set defined in \eqref{gama1n} is actually the  multiplicative subgroup of $(\mathbb{Z}_n,\oplus,\odot)$. Equivalently, $\Gamma_1^n$ may be defined as the  group of units of the ring of integers modulo n, that is
\begin{equation}
((\mathbb{Z}_n)^*,\odot):=\{\overline{\delta}\in\mathbb{Z}_n : \exists\gamma\in\mathbb{Z}: \delta\gamma\equiv 1 \ (mod \ n)\}
\end{equation}
In order to generalize the definition of $\Gamma_1^n$ to a set where the greatest common divisors are allowed to be different from $1$, let us introduce the following notations. Let $n\in\mathbb{N}$ and $\mathcal{D}_n:=\{\delta\in\mathbb{N}: \delta | n \}=\{\delta_1,\cdots,\delta_{|\mathcal{D}_n|}\}$ be the set of all divisors of $n$.
It is clear that $\mathcal{D}_0=\mathbb{N}_1$ with $|\mathcal{D}_0|=\aleph_0$
and  $1\leq|\mathcal{D}_n|\leq n$ for $n>0$. Furthermore, let $n=n_1^{l_1}\cdots n_m^{l_m}$ be the prime decomposition of $n$, then it is also clear that $|\mathcal{D}_n|=\prod_{i=1}^{m}(l_i+1)$. \par
Let $\delta_i\in\mathcal{D}_n$, then we may define 
\begin{equation}
\Gamma_{\delta_i}^n:=\{\overline{\delta}\in\mathbb{Z}_n : \text{gcd} \ (\delta,n)=\delta_i\}.
\end{equation} 
For the sake of contextual harmony and parsimony we shall use the notation $\Gamma_{1}^n$ instead of $((\mathbb{Z}_n)^*,\odot)$. Notice, that since $1\notin\Gamma_{\alpha}^n$ if $\alpha\neq1$, only $\Gamma_1^n$ is a group. Also, if $\alpha$ is not a divisor of $n$, then $\Gamma_{\alpha}^n=\emptyset$. Furthermore, since the relation defined by $\overline{a}\sim \overline{b} :\Leftrightarrow  \text{gcd} \ (a,n)= \text{gcd} \ (b,n)$ is an equivalence relation on $\mathbb{Z}_n$, it becomes clear that the sets $\Gamma^{n}_{\delta_i}$ partition the set $\mathbb{Z}_n$, that is

\begin{equation}\label{partition}
\coprod_{i=1}^{|\mathcal{D}_n|}\Gamma^{n}_{\delta_i}=\mathbb{Z}_n. 
\end{equation} 

\begin{df}
	Let $(G,\odot)$ be a group and $g\in(G,\odot)$, then we denote its \textbf{cardinality} or \textbf{order} by $|G|$. We say \textbf{$g$ has order $\delta$ in $G$}, denoted by $\text{ord}(g)=\delta$, if $\delta$ is the smallest positive integer for which $g^\delta=e$, where $e$ is the identity element in $G$ and $g^\delta$ represents $g$ $\odot$-multiplied $\delta$ times. Moreover, the order of an element $g$ is equal to the order of its cyclic subgroup $\langle g\rangle:= \{g^k: \  k\in\mathbb{Z}\}$, that is, $\text{ord}(g)=|\langle g\rangle|$.
\end{df}	

\begin{rem}
	It has already been shown that the group $\Gamma_1^n$ is cyclic if and only if $n=1, 2, 4, p^j$ or $2p^j$ where $p$ is an odd prime number \cite{IrelandRosen}. Thus, only in these cases, $\exists g\in\Gamma_1^n$ such that $\{1,g,\cdots, g^{\varphi(n)-1}\}=\Gamma_1^n$, where $\varphi$ is the \textbf{Euler's totient function}, which gives the cardinality of $\Gamma_1^n$, that is, $\varphi(n)=|\Gamma_1^n|$.
\end{rem}

\begin{lm}\label{orderofelement}
	The order of any element in $\mathbb{Z}_n$ divides $n$. In addition, if $\delta$ is a divisor of $n$, then the set of elements of order $\delta$ in $\mathbb{Z}_n$ is non-empty.
\end{lm}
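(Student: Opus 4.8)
The plan is to read ``order'' as the additive order in the group $(\mathbb{Z}_n,\oplus)$, since this is the unique operation under which the whole of $\mathbb{Z}_n$ (and not merely its units $\Gamma_1^n$) forms a group. Thus, for $\overline{a}\in\mathbb{Z}_n$, I take $\text{ord}(\overline{a})$ to be the least positive integer $\delta$ with $\delta\overline{a}=\overline{a}\oplus\cdots\oplus\overline{a}=\overline{0}$, equivalently the least $\delta>0$ such that $\delta a\equiv 0\pmod n$. Throughout I assume $n\in\mathbb{N}_1$, so that $(\mathbb{Z}_n,\oplus)$ is a finite cyclic group of order $n$ generated by $\overline{1}$.

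For the first assertion I would argue by the division algorithm. Since $n\overline{a}=\overline{na}=\overline{0}$, the order $\delta:=\text{ord}(\overline{a})$ exists and satisfies $\delta\leq n$. Writing $n=q\delta+r$ with $0\leq r<\delta$, linearity gives $\overline{0}=n\overline{a}=q(\delta\overline{a})\oplus r\overline{a}=r\overline{a}$, so $r\overline{a}=\overline{0}$ with $0\leq r<\delta$; minimality of $\delta$ forces $r=0$, whence $\delta\mid n$. Equivalently, one may invoke the identity $\text{ord}(\overline{a})=|\langle\overline{a}\rangle|$ recorded in the preceding definition together with Lagrange's theorem applied to the subgroup $\langle\overline{a}\rangle\leq(\mathbb{Z}_n,\oplus)$.

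For the second assertion, given a divisor $\delta\mid n$, I would exhibit the explicit witness $\overline{n/\delta}$, which is well defined precisely because $\delta\mid n$ makes $n/\delta$ an integer. On the one hand $\delta\,\overline{n/\delta}=\overline{n}=\overline{0}$, so its order is at most $\delta$; on the other, if $0<m<\delta$ satisfied $m\,\overline{n/\delta}=\overline{0}$, then $n\mid mn/\delta$, i.e.\ $\delta\mid m$, which is impossible in that range. Hence $\text{ord}(\overline{n/\delta})=\delta$, and the set of elements of order $\delta$ is nonempty. The content here is entirely the standard structure theory of finite cyclic groups, so I expect no genuine mathematical obstacle; the only points demanding care are notational---reconciling the multiplicative phrasing of the order definition with the additive structure actually in play---together with the bookkeeping that $n/\delta$ is integral and that the divisibility $\delta\mid m$ indeed fails for $0<m<\delta$.
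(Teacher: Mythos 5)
Your proposal is correct and follows essentially the same route as the paper: Lagrange's theorem (your division-algorithm variant is an elementary equivalent) for the first claim, and the same explicit witness $\overline{n/\delta}$ for the second. If anything, your verification that no $0<m<\delta$ annihilates $\overline{n/\delta}$ is slightly more complete than the paper's, which only rules out proper divisors $\delta'$ of $\delta$ and leaves implicit the fact that any annihilating exponent is a multiple of the order.
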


\begin{proof}
	 Let $\bar{x}\in\mathbb{Z}_n$, then since  $\langle\bar{x} \rangle := \{\bar{x}k: \  k\in\mathbb{Z}\}$ is a subgroup of  $\mathbb{Z}_n$, by Lagrange's theorem, $\text{ord}(\bar{x}) | n$. Furthermore, for the second claim, let $\delta$ be a divisor of $n$ and $A_\delta$ denote the set of all elements of order $\delta$ in $\mathbb{Z}_n$, we shall prove that $\overline{\left(\frac{n}{\delta}\right)}\in A_\delta$. First, note that $\overline{\left(\frac{n}{\delta}\right)}\bar{\delta}=\bar{0}$, now suppose $\exists\delta'\in\mathbb{N}$ such that $\delta'|\delta$ and $\overline{\left(\frac{n}{\delta}\right)}\bar{\delta'}=\bar{0}$.  Since $\delta'|n$, $\exists\alpha,\beta\in\mathbb{N}$ such that  $n=\delta'\alpha $ and $\delta=\beta\delta'$,  it follows that $\frac{n}{\delta}\delta'=\frac{n}{\beta}$ and consequently $\overline{\left(\frac{n}{\delta}\right)}\bar{\delta'}=\overline{\left(\frac{n}{\beta}\right)}$, that is $\overline{\left(\frac{n}{\beta}\right)}=\bar{0}$, which implies $\beta=1$ and $\delta=\delta'$. Thus, $\overline{\left(\frac{n}{\delta}\right)}\in A_\delta$.
	
\end{proof}

For future endeavors a particular group action on $\mathbb{Z}_n$ will be very useful:

\begin{pro}\label{G-action}
	The map $\phi: \Gamma^n_1\times\mathbb{Z}_n\rightarrow\mathbb{Z}_n$, given by $\phi(\bar{g},\bar{x}):=\bar{g}\bar{x}$ is a left group action on $\mathbb{Z}_n$, which decomposes $\mathbb{Z}_n$ into $|\mathcal{D}_n|$-many distinct orbits given by $\Gamma_{1}^n\cdot\bar{\delta_i}$, where $\delta_i\in\mathcal{D}_n$.	
\end{pro}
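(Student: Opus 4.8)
The plan is to prove the two assertions separately: that $\phi$ is a left action, and that its orbits are exactly the sets $\Gamma_\delta^n$ introduced before \eqref{partition}. The first part is purely verificational and I would dispatch it quickly. Well-definedness is automatic since $\mathbb{Z}_n$ is closed under $\odot$; the identity axiom holds because $\bar1$ is the unit of $\Gamma_1^n$ and $\phi(\bar1,\bar x)=\bar1\bar x=\bar x$; and compatibility is just associativity of $\odot$, namely $\phi(\bar g,\phi(\bar h,\bar x))=\bar g(\bar h\bar x)=(\bar g\bar h)\bar x=\phi(\bar g\bar h,\bar x)$.

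The substance of the proposition lies in the identity $\Gamma_1^n\cdot\bar\delta=\Gamma_\delta^n$ for each divisor $\delta\in\mathcal D_n$, which I would establish by two inclusions. For the forward inclusion I would use the elementary fact that multiplication by a unit preserves the gcd with $n$: if $\gcd(g,n)=1$ then $\gcd(g\delta,n)=\gcd(\delta,n)=\delta$, the last equality holding because $\delta\mid n$. This follows from \autoref{lema}: any common divisor $d$ of $g\delta$ and $n$ satisfies $\gcd(d,g)=1$ (as $d\mid n$ and $\gcd(g,n)=1$), so from $d\mid g\delta$ one gets $d\mid\delta$, whence $\gcd(g\delta,n)\mid\delta$; the reverse divisibility $\delta\mid\gcd(g\delta,n)$ is immediate. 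Thus $\bar g\bar\delta\in\Gamma_\delta^n$.

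The reverse inclusion $\Gamma_\delta^n\subseteq\Gamma_1^n\cdot\bar\delta$ is where I expect the real difficulty. Given $\bar y$ with $\gcd(y,n)=\delta$, I would write $y=\delta y'$ and $n=\delta n'$, so that $\gcd(y',n')=1$; a short computation then shows that the condition $\bar g\bar\delta=\bar y$ is equivalent to $g\equiv y'\ (\mathrm{mod}\ n')$. Hence the task reduces to lifting the residue $y'$, which is coprime to $n'$, to a genuine unit $g$ modulo $n$; equivalently, I must show that reduction modulo $n'$ maps $\Gamma_1^n$ onto the units modulo $n'$. The hard part will be exactly this surjectivity, and I would handle it with the Chinese Remainder Theorem: letting $p_1,\dots,p_r$ be the primes dividing $n$ but not $n'$, I choose $g$ with $g\equiv y'\ (\mathrm{mod}\ n')$ and $g\equiv 1\ (\mathrm{mod}\ p_i)$ for each $i$ (these moduli being pairwise coprime), and then verify prime-by-prime that $\gcd(g,n)=1$. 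This is the only step carrying genuine arithmetic content.

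Finally I would assemble the pieces. Each $\Gamma_{\delta_i}^n$ is nonempty, since it contains $\bar{\delta_i}$ (because $\gcd(\delta_i,n)=\delta_i$), and by \eqref{partition} the family $\{\Gamma_{\delta_i}^n\}_{\delta_i\in\mathcal D_n}$ partitions $\mathbb{Z}_n$ into exactly $|\mathcal D_n|$ blocks. Having identified each block with the orbit $\Gamma_1^n\cdot\bar{\delta_i}$, and invoking the standard fact that the orbits of any group action partition the underlying set, I conclude that these are precisely the orbits of $\phi$ and that there are exactly $|\mathcal D_n|$ of them, as claimed.
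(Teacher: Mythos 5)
Your proof is correct, and it takes a genuinely different route from the paper's. The paper argues by counting: it picks an element $\bar{x}$ of additive order $\delta$ in $\mathbb{Z}_n$ (via \autoref{orderofelement}), identifies the stabilizer $S_{\bar{x}}$ with the kernel of the reduction homomorphism $\psi:\Gamma_1^n\rightarrow\Gamma_1^\delta$, obtains $|\Gamma_1^n\cdot\bar{x}|=\varphi(\delta)$ from the orbit--stabilizer theorem, and then uses the identity $\sum_{\delta_i\in\mathcal{D}_n}\varphi(\delta_i)=n$ to conclude that the divisor-class orbits are disjoint and exhaust $\mathbb{Z}_n$; the identification $\Gamma_1^n\cdot\bar{\delta_i}=\Gamma_{\delta_i}^n$ is only established afterwards, in \autoref{orbiteq}, by comparing the partitions \eqref{partition} and \eqref{partition2}. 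You instead prove that identification directly by double inclusion --- the forward inclusion by the gcd-preservation computation (essentially the same argument the paper later uses in \autoref{orbiteq}, correctly grounded in \autoref{lema}), the reverse inclusion by an explicit Chinese Remainder lifting of a residue coprime to $n'=n/\delta$ to a genuine unit modulo $n$ --- after which you need only \eqref{partition} and the standard fact that orbits partition the acted-on set. Each approach buys something. Yours is more elementary and more self-contained: the surjectivity of $\psi$, which the paper asserts without proof inside its argument, is precisely the lifting statement you carry out in full, so you supply the one arithmetic fact the paper's proof takes for granted; your route also renders \autoref{orbiteq} an immediate corollary rather than a separate theorem. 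The paper's counting, conversely, yields quantitative information your inclusions do not produce: the orbit cardinalities, $|\Gamma_1^n\cdot\bar{x}|=\varphi(\delta)$ for $\bar{x}$ of order $\delta$, equivalently $|\Gamma_{\delta_i}^n|=\varphi(n/\delta_i)$, which fall out of the orbit--stabilizer theorem for free. Your edge cases ($\delta=1$, $\delta=n$, and the case where no prime of $n$ lies outside $n'$) all pass the same prime-by-prime verification, so there is no gap.
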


\begin{proof}
	Let $n$ and $\delta|n$ be positive integers. Let $\psi: \Gamma^n_1\rightarrow\Gamma^\delta_1$ be the map given by $\psi(\bar{g})=\tilde{g}$, where $\bar{g}$ and $\tilde{g}$ are the congruence classes of the integer $g$ modulo $n$ and $\delta$, respectively. Then, $\psi$ is a well-defined surjective homomorphism with $|\ker\psi|=\varphi(n)/\varphi(\delta)$. 
	Now, according to \autoref{orderofelement}, it is always possible to pick an element $\bar{x}$ of order $\delta$ in $\mathbb{Z}_n$. Let us consider the stabilizer subgroup of  $\Gamma^n_1$ with respect to $\bar{x}$, i.e. $S_{\bar{x}}:=\{\bar{g}\in\Gamma^n_1: \ \bar{g}\bar{x}=\bar{x}\}$. If $\bar{x}=\bar{0}$, then $\delta=1$ and it trivially follows that $\tilde{g}=\tilde{1}$. Let $\bar{x}\neq\bar{0}$, then from $(\bar{g}-\bar{1})\bar{x}=\bar{0}$ and $\bar{\delta}\bar{x}=\bar{0}$ it follows that $\bar{\delta}=\bar{g}-\bar{1}$, that is, $\tilde{g}=\tilde{1}$. Therefore, we may rewrite $S_{\bar{x}}$ as $S_{\bar{x}}:=\{\bar{g}\in\Gamma^n_1: \ \tilde{g}=\tilde{1}\}$, which corresponds to the kernel of the homomorphism $\psi$. Therefore, $|S_{\bar{x}}|=\varphi(n)/\varphi(\delta)$. By the orbit-stabilizer theorem, we have $|\mathbb{Z}_n|=|\Gamma_{1}^n\cdot\bar{x}||S_{\bar{x}}|$ and consequently $|\Gamma_{1}^n\cdot\bar{x}|=\varphi(\delta)$. Now, let $\bar{y}\in\Gamma_{1}^n\cdot\bar{x}$, then $\exists\bar{g}'\in\Gamma_{1}^n: \ \bar{y}=\bar{g}'\bar{x}$ which implies $\bar{\delta}\bar{y}=\bar{g}'\bar{\delta}\bar{x}=\bar{0}$. Moreover, suppose $\exists q\in\mathbb{N}: \ q|\delta$ and $\bar{q}\bar{y}=\bar{0}$, then $\bar{q}\bar{y}=\bar{g}'\bar{q}\bar{x}=0\implies \ \bar{q}\bar{x}=\bar{0}$ and since $\bar{x}$ is, by assumption, of order $\delta$, it follows that $q=\delta$ and every element of $\Gamma_{1}^n\cdot\bar{x}$ is of order $\delta$. Therefore, since the orbit $\Gamma_{1}^n\cdot\bar{x}$ has $\varphi(\delta)$ elements of order $\delta$, we may choose another representative for the orbit of $\bar{x}$, for instance let $\delta_i:=\overline{\left(\frac{n}{\delta}\right)} | n$, then $\Gamma_{1}^n\cdot\bar{x}=\Gamma_{1}^n\cdot\bar{\delta_i}$. Finally, since $\sum_{\delta_i\in\mathcal{D}_n}\varphi(\delta_i)=n$, we have
	\begin{equation}\label{partition2}
	\mathbb{Z}_n=\coprod_{i=1}^{|\mathcal{D}_n|}\Gamma_{1}^n\cdot\bar{\delta_i}
	\end{equation}
\end{proof}

\begin{teo}\label{orbiteq}
	Let $\delta_i$ be a divisor of $n$ and $\Gamma_1^n\cdot \overline{\delta_i}= \{\overline{g}\cdot\overline{\delta_i}, \ \overline{g}\in\Gamma_{1}^n \} $ be its orbit under the action of $\Gamma_1^n$. Then
	\begin{equation}
	\Gamma_1^n\cdot \overline{\delta_i}=\Gamma_{\delta_i}^n
	\end{equation}
\end{teo}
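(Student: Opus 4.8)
The plan is to prove the set equality $\Gamma_1^n\cdot\overline{\delta_i}=\Gamma_{\delta_i}^n$ by a double inclusion, since both sides are concrete subsets of $\mathbb{Z}_n$ cut out by gcd conditions. The single observation driving both directions is that multiplication by a unit leaves the gcd with $n$ unchanged, i.e.\ $\gcd(g,n)=1$ implies $\gcd(ga,n)=\gcd(a,n)$ for every integer $a$. First I would record this auxiliary fact: setting $d=\gcd(a,n)$, clearly $d\mid ga$ and $d\mid n$, so $d\mid\gcd(ga,n)$; conversely, writing $e=\gcd(ga,n)$, from $e\mid n$ and $\gcd(g,n)=1$ we get $\gcd(g,e)=1$, and then $e\mid ga$ together with $\gcd(e,g)=1$ forces $e\mid a$ by \autoref{lema}, whence $e\mid d$ and therefore $e=d$.

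With this in hand the inclusion $\Gamma_1^n\cdot\overline{\delta_i}\subseteq\Gamma_{\delta_i}^n$ is immediate. Take any representative $\overline{g}\in\Gamma_1^n$, so that $\gcd(g,n)=1$. By the auxiliary fact, $\gcd(g\delta_i,n)=\gcd(\delta_i,n)=\delta_i$, where the last equality holds because $\delta_i\mid n$. Hence $\overline{g}\cdot\overline{\delta_i}=\overline{g\delta_i}$ satisfies $\gcd(g\delta_i,n)=\delta_i$, i.e.\ $\overline{g}\cdot\overline{\delta_i}\in\Gamma_{\delta_i}^n$.

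For the reverse inclusion $\Gamma_{\delta_i}^n\subseteq\Gamma_1^n\cdot\overline{\delta_i}$, I would start from an element $\overline{a}$ with $\gcd(a,n)=\delta_i$ and write $a=\delta_i a'$, $n=\delta_i n'$ with $\gcd(a',n')=1$. The goal is to realize $\overline{a}$ as $\overline{g}\cdot\overline{\delta_i}$ for some unit $\overline{g}\in\Gamma_1^n$, which amounts to solving $g\delta_i\equiv a\pmod n$ for a $g$ coprime to $n$. Substituting $a=\delta_i a'$ and $n=\delta_i n'$, this congruence simplifies to $\delta_i(g-a')\equiv0\pmod{\delta_i n'}$, i.e.\ to the much lighter requirement $g\equiv a'\pmod{n'}$. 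Thus everything reduces to lifting the residue $a'$, which is coprime to $n'$, to a residue that is coprime to the full modulus $n$.

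That lifting step is where I expect the only real work to lie, and it is exactly the surjectivity of the reduction homomorphism $\Gamma_1^n\to\Gamma_1^{n'}$ established inside the proof of \autoref{G-action} (there stated for an arbitrary divisor, applied here with the divisor $n'=n/\delta_i$). Since $\overline{a'}\in\Gamma_1^{n'}$, surjectivity furnishes a unit $\overline{g}\in\Gamma_1^n$ with $g\equiv a'\pmod{n'}$; then $g=a'+t n'$ for some integer $t$ gives $g\delta_i=\delta_i a'+t\,\delta_i n'=a+tn$, so $\overline{g}\cdot\overline{\delta_i}=\overline{a}$. Combining the two inclusions yields the claimed equality. (If one preferred to keep the argument self-contained, the same lifting could instead be obtained directly from the Chinese Remainder Theorem, prescribing $g\equiv a'\pmod{n'}$ and $g\equiv1$ on the remaining prime-power factors of $n$.)
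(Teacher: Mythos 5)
Your proof is correct, but the reverse inclusion is handled by a genuinely different route than the paper's. The paper proves $\Gamma_1^n\cdot\overline{\delta_i}\subseteq\Gamma_{\delta_i}^n$ by the same gcd computation you use (though you make the unit-preserves-gcd lemma explicit and derive it cleanly from \autoref{lema}), and then gets the converse \emph{globally}: it invokes \autoref{G-action} to note that the orbits $\Gamma_1^n\cdot\overline{\delta_i}$ and the sets $\Gamma_{\delta_i}^n$ are two partitions of $\mathbb{Z}_n$ indexed by the same divisor set, so the family of one-sided inclusions forces equality piecewise --- a counting argument that never exhibits, for a given $\overline{a}\in\Gamma_{\delta_i}^n$, which unit maps $\overline{\delta_i}$ to it. You instead argue \emph{locally and constructively}: writing $a=\delta_i a'$, $n=\delta_i n'$ with $\gcd(a',n')=1$, you correctly cancel $\delta_i$ to reduce the problem to lifting the unit $a'$ modulo $n'$ to a unit modulo $n$, and your CRT prescription ($g\equiv a'\pmod{n'}$, $g\equiv 1$ modulo the prime powers of $n$ at primes not dividing $n'$; coprimality to primes dividing $n'$ is automatic since $p\mid n'$ implies $p\nmid a'$) is a complete and correct way to do this. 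One caveat: your primary citation for the lifting --- surjectivity of the reduction map $\Gamma_1^n\to\Gamma_1^{n'}$ --- is a fact the paper merely \emph{asserts} inside the proof of \autoref{G-action} without proof, so leaning on it inherits an unproved step; your parenthetical CRT argument is precisely what fills that hole and makes your version self-contained. The trade-off: the paper's route is shorter once \autoref{G-action} (orbit-stabilizer, Euler's totient, $\sum_{\delta\mid n}\varphi(\delta)=n$) is in place, while yours is independent of all that machinery, explicitly produces the acting unit, and as a by-product actually proves the surjectivity of $\psi$ that the paper takes for granted.
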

\begin{proof}
	Let us first prove that $\Gamma_1^n\cdot \overline{\delta_i}\subseteq \Gamma_{\delta_i}^n$. Let $\overline{\delta}\in\Gamma_1^n\cdot \overline{\delta_i}$, then $\exists\overline{g}\in\Gamma_1^n$ such that $\overline{\delta}=\overline{g}\overline{\delta_i}$ which implies $\delta =g\delta_i+nl$ for some $l\in\mathbb{Z}$. Therefore, $\text{gcd} \ (\delta,n)=\text{gcd} \ (g\delta_i+nl,n)=\text{gcd} \ (g\delta_i,n)=\text{gcd} \ (\delta_i,n)=\delta_i$.Thus, by definition, $\overline{\delta}\in\Gamma_{\delta_i}^n$.\\
	Now let us prove the inverse inclusion, that is,  $\Gamma_{\delta_i}^n\subseteq\Gamma_1^n\cdot \overline{\delta_i} $. By equations \eqref{partition} and \eqref{partition2}, we have 

	\begin{equation}\label{partitionequality}
	\coprod_{i=1}^{|\mathcal{D}_n|}\Gamma^{n}_{\delta_i}=\coprod_{i=1}^{|\mathcal{D}_n|}\Gamma_{1}^n\cdot\bar{\delta_i}.
	\end{equation}

	Equation \eqref{partitionequality} together with the inclusion established in the first part of this proof imply $\Gamma_1^n\cdot \overline{\delta_i}=\Gamma_{\delta_i}^n$.
	 
\end{proof}

\color{black}


\begin{ex}\label{Z9}
	Let us consider the case of $n=9$, which will develop an important role in the proofs of our main theorems on the invariance of the digital root. In this case, we have the set $\mathbb{Z}_9=\{\overline{0},\overline{1},\cdots,\overline{8}\}$. The divisors of $9$ are $1$, $3$ and $9$, and consequently  $\mathcal{D}_9=\{1,3,9\}$. Moreover, using \autoref{G-action} and \autoref{orbiteq}, we have the following $3$ distinct orbits
	\begin{align*}
	&\Gamma_{1}^9:=\{\overline{\delta}\in\mathbb{Z}/9\mathbb{Z} : \text{gcd} \ (\delta,9)=1\}=\{\overline{1},\overline{2},\overline{4},\overline{5},\overline{7},\overline{8}\}\\
	&\Gamma_{3}^9:=\{\overline{\delta}\in\mathbb{Z}/9\mathbb{Z} : \text{gcd} \ (\delta,9)=3\}=\{\overline{3},\overline{6}\}\\
	&\Gamma_{9}^9:=\{\overline{\delta}\in\mathbb{Z}/9\mathbb{Z} : \text{gcd} \ (\delta,9)=9\}=\{\overline{0}\}.
	\end{align*}

\end{ex}


\begin{pro}\label{gcdr^jk-1}
Let $k\in\mathbb{N}_2$. If	$r\in\mathcal{D}_k$, then $\bar{r}\in\Gamma_1^{k-1}$.
\end{pro}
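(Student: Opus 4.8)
The plan is to reduce membership in $\Gamma_1^{k-1}$ to a coprimality statement and then derive that statement from the transitivity of divisibility together with \autoref{kk-1}. By the definition of $\Gamma_1^{k-1}$ in \eqref{gama1n}, the class $\bar r$ (the residue of $r$ modulo $k-1$) lies in $\Gamma_1^{k-1}$ precisely when $\gcd(r,k-1)=1$. Hence the entire task reduces to showing that any divisor of $k$ is coprime to $k-1$.

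First I would fix $r\in\mathcal{D}_k$, so that $r\mid k$, and set $d:=\gcd(r,k-1)$. From $d\mid r$ and $r\mid k$ the transitivity of divisibility gives $d\mid k$; simultaneously $d\mid(k-1)$ by the definition of $d$. Thus $d$ is a common divisor of the consecutive integers $k$ and $k-1$, so $d\mid\gcd(k,k-1)$.

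To finish I would invoke \autoref{kk-1}, which asserts $\gcd(k,k-1)=1$ for every $k\in\mathbb{N}_2$. Consequently $d\mid 1$, forcing $d=1$, i.e. $\gcd(r,k-1)=1$, and therefore $\bar r\in\Gamma_1^{k-1}$. Equivalently, this is exactly the content of \autoref{gcdrk-1}, whose one-line justification is the very divisibility chain just described, so one could alternatively cite that remark directly.

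There is essentially no obstacle here: the statement is immediate once the definition of $\Gamma_1^{k-1}$ is unpacked. The only point deserving care is the bookkeeping of which modulus each residue class refers to—namely that $\bar r$ is taken modulo $k-1$, not modulo $k$—so that the coprimality condition defining $\Gamma_1^{k-1}$ is applied to the integer $r$ against the modulus $k-1$.
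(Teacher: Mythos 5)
Your proposal is correct and matches the paper's argument: the paper's one-line proof simply unpacks the definition of $\Gamma_1^{k-1}$ and cites \autoref{gcdrk-1} (which rests on \autoref{kk-1}) to get $\gcd(r,k-1)=1$, exactly as you do. The only difference is that you spell out the divisibility chain $d\mid r\mid k$ and $d\mid(k-1)$ that the remark leaves implicit, which is a harmless (indeed welcome) elaboration of the same route.
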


\begin{proof}
	Let $r\in\mathcal{D}_k$, that is, $r$ is a divisor of $k$, then as stated in \autoref{gcdrk-1},  $\text{gcd} \ (r,k-1)=1$ which implies $\bar{r}\in\Gamma_1^{k-1}$.
\end{proof}

\subsection{Finite and infinite representation theorems}\label{fin and inf rep}

\begin{teo}[Finite Basis Representation Theorem]\label{FBRT}
	Let $q\in\mathbb{Q}^+$, then $q\in\mathbb{Q}^+_{T_k}$ if, and only if. there exists a unique finite representation for $q$ with respect to the base $k$ as
	
	\begin{equation}\label{basek}
	q=\sum_{j=0}^{p_i}\alpha_{p_i-j} k^{p_i-\rho_0-j}=\alpha_{p_i}k^{p_i-\rho_0}+\alpha_{p_i-1}k^{p_i-\rho_0-1}+\cdots+\alpha_{0}k^{-\rho_0},
	\end{equation}
	where $\alpha_{p_i}, \alpha_{p_i-1},\cdots, \alpha_{0}\in\langle k-1\rangle$,  such that  $0 < \alpha_{p_i}<k$ if $p_i-\rho_0> 0$ and $0 < \alpha_{0}<k$ if $\rho_0>0$.
\end{teo}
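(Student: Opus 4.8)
The plan is to reduce everything to the classical existence-and-uniqueness of the base-$k$ expansion of a positive integer, and to lift that result to $q$ by clearing denominators with the power $k^{\rho_0}$ guaranteed by \autoref{fractional}. Throughout I re-index the sum in \eqref{basek} by $m=p_i-j$, so that the claimed expansion reads $q=\sum_{m=0}^{p_i}\alpha_m k^{m-\rho_0}$ with $\alpha_m\in\langle k-1\rangle$.

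The direction asserting that a finite expansion forces $q\in\mathbb{Q}^+_{T_k}$ is immediate: multiplying $q=\sum_{m=0}^{p_i}\alpha_m k^{m-\rho_0}$ by $k^{\rho_0}$ gives $k^{\rho_0}q=\sum_{m=0}^{p_i}\alpha_m k^{m}$, a finite sum of integers, so $k^{\rho_0}q\in\mathbb{Z}$ and $q$ is terminating by \autoref{fractional}. For the converse I would start from $q\in\mathbb{Q}^+_{T_k}$, take the minimum exponent $\rho_0$ supplied by \autoref{fractional}, and set $N:=k^{\rho_0}q\in\mathbb{N}$, which is positive whenever $q>0$. The problem now collapses to representing the single integer $N$ in base $k$ and dividing back by $k^{\rho_0}$.

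The engine is an auxiliary claim that every $N\in\mathbb{N}_1$ has a unique base-$k$ expansion $N=\sum_{m=0}^{p}\beta_m k^m$ with $\beta_m\in\langle k-1\rangle$ and $\beta_p\neq0$. Existence I would obtain by strong induction on $N$ through the division algorithm: a single digit if $N<k$, and otherwise $N=kQ+\beta_0$ with $0\le\beta_0<k$ and $1\le Q<N$, feeding $Q$ to the induction hypothesis and shifting its digits up by one place. Uniqueness I would likewise prove inductively: reducing $\sum\beta_m k^m=\sum\gamma_m k^m$ modulo $k$ forces $\beta_0=\gamma_0$ (both lie in $\langle k-1\rangle$), whereupon subtracting and dividing by $k$ invokes the induction hypothesis on a smaller integer. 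Dividing the expansion of $N$ by $k^{\rho_0}$ then produces exactly \eqref{basek} with $p_i=p$ and $\alpha_m=\beta_m$, and the leading condition $\alpha_{p_i}\neq0$ is nothing but $\beta_p\neq0$.

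The one place where the \emph{minimality} of $\rho_0$ is genuinely used --- and the part I expect to require the most care --- is the trailing-digit condition together with the uniqueness of the lifted expansion. If $\rho_0>0$ and $\alpha_0=\beta_0=0$, then $k\mid N$ and hence $k^{\rho_0-1}q=N/k\in\mathbb{Z}$, contradicting the minimality of $\rho_0$; this both establishes $\alpha_0\neq0$ and shows that any admissible expansion of $q$ must use precisely the minimal exponent, since a trailing zero could otherwise be shed. Consequently two admissible expansions of $q$ share the same $\rho_0$, and multiplying each by $k^{\rho_0}$ yields two base-$k$ expansions of the \emph{same} integer $N$, which coincide by the uniqueness above, forcing $p_i=p_i'$ and $\alpha_m=\alpha_m'$. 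The remaining delicate bookkeeping is the degenerate range $p_i\le\rho_0$ --- the cases $0\le q<1$ and $q\in\mathbb{Z}$ --- where the leading and trailing conditions may address the same digit or the units digit is forced to vanish; I would check separately that uniqueness survives in these boundary cases.
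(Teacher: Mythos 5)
Your proposal is correct and follows essentially the same route as the paper: both directions reduce to the classical existence-and-uniqueness of the base-$k$ expansion of the integer $k^{\rho_0}q$ and then divide back by $k^{\rho_0}$. The only differences are ones of self-containment and care --- you prove the integer-case theorem (which the paper simply cites) by strong induction, and you make explicit the role of the minimality of $\rho_0$ in forcing the trailing-digit condition $\alpha_0\neq 0$ and in pinning down the exponent across competing expansions, bookkeeping the paper's proof leaves implicit.
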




\begin{proof}
	Let $q\in\mathbb{Q}^+_{T_k}$, then $k^{\rho_0}q\in\mathbb{N}$ where $\rho_0$ is the minimum exponent of $q$ with respect to the base $k$. By the Finite Basis Representation Theorem for natural numbers, $k^{\rho_0}q$ can be written as
	\begin{equation}\label{equation t_0}
	k^{\rho_0}q=\sum_{j=0}^{p_i}\alpha_{p_i-j} k^{p_i-j}=\alpha_{p_i}k^{p_i}+\alpha_{p_i-1}k^{p_i-1}+\cdots+\alpha_{0},
	\end{equation}
	where $\alpha_0,\alpha_1,\cdots \alpha_{p_i}\in\langle k-1\rangle$ and $\alpha_{p_i}\neq 0$. From equation \eqref{equation t_0} we obtain
	\begin{equation}
	q=\sum_{j=0}^{p_i}\alpha_{p_i-j} k^{p_i-\rho_0-j}=\alpha_{p_i}k^{p_i-\rho_0}+\alpha_{p_i-1}k^{p_i-\rho_0-1}+\cdots+\alpha_{0}k^{-\rho_0}.
	\end{equation}
	The uniqueness of the representation with respect to a specific basis follows directly from the Finite Basis Representation for natural numbers (see \cite{Andrews}, pages 8-10).
	The reverse implication is straightforward, since if $q\in\mathbb{Q}^+$ is given by equation \eqref{basek}, then $k^{\rho_0}q\in\mathbb{N}$, which by \autoref{fractional} means $q\in\mathbb{Q}^+_{T_k}$.
\end{proof}

\color{black}

\begin{rem}
	According to the preceding theorem, a natural number $n$ may be uniquely represented in terms of its digits with respect to the base $k$.
	 Therefore, we shall conveniently depict a natural number $n$ with respect to the base $k$ as a juxtaposition of its digits in the following fashion $[\alpha_{p_i}\alpha_{p_i-1}\cdots \alpha_{0}]_k$, and in the case of the base $10$ we write simply $[\alpha_{p_i}\alpha_{p_i-1}\cdots \alpha_{0}]_{10}:=\alpha_{p_i}\alpha_{p_i-1}\allowbreak\cdots \alpha_{0}$. For non-integer numbers with finite representation given by \eqref{basek} we may write $[\alpha_{p_i}\cdots\alpha_{\rho_0},\alpha_{\rho_0-1}\cdots \alpha_{0}]_k$. 
	 By using this convention, the finite basis representation theorem for natural numbers may be extended for non-repeating fractionals as was done in \ref{FBRT}.
	
\end{rem}

\begin{ex}
	Let us see some illustrative examples. 
	\begin{enumerate}[(i)]
		\item $7205=7\cdot 10^3+2\cdot 10^2+0\cdot 10^1+5\cdot 10^0$
		\item $[425]_6=4\cdot 6^2+2\cdot 6^1+5\cdot 6^0=144+12+5=161.$
		\item $
		[101011]_2  =1\cdot 2^5+0\cdot 2^4+1\cdot 2^3+0\cdot 2^2+1\cdot 2^1+1\cdot 2^0 $\\
		  $= (1\cdot 3^3+0\cdot 3^2+1\cdot 3^1+2\cdot 3^0)+(2\cdot 3^1+2\cdot 3^0)+(2\cdot 3^0)+(1\cdot 3^0)$\\
		 $= 1\cdot 3^3+1\cdot 3^2+2\cdot 3^1+1\cdot 3^0= [1121]_3$
		 \item $	72.05=7\cdot 10^1+2\cdot 10^0+0\cdot 10^{-1}+5\cdot 10^{-2}.$
		 \item $[4.25]_6=4\cdot 6^0+2\cdot 6^{-1}+5\cdot 6^{-2}=\dfrac{161}{36}.$
		 \item Let us consider the hexadecimal basis with $A=10$, $B=11$, $C=12$, $D=13$, $E=14$ and $F=15$. Then
		 \begin{equation*}
		 [2A7E]_{16}=2\cdot 16^3+A\cdot 16^2+7\cdot 16^1+E\cdot 16^0=10878.
		 \end{equation*}
	\end{enumerate}
\end{ex}
\begin{rem}
	Notice in the last item of the previous example that the number $161/36$ is, according to \autoref{teofraciff}, a repeating fractional number to the base $10$ (i.e. a repeating decimal number), since $\text{gcd} \ (161,36)=1$ and $36=2^23^3$, where $3$ is not a divisor of $10$.
	Notwithstanding, $161/36$ is a terminating fractional to the base $6$ and therefore admits a finite basis representation to this base.
\end{rem}

\begin{lm}\label{k^m-geq}
	Let $k\in\mathbb{N}_2$, $a_j\in\langle k-1\rangle$ for all $j\in\mathbb{N}$, then
	\begin{equation}\label{k^m-ineq}
	k^m\geq \sum_{j=0}^{+\infty}a_{j} k^{m-1-j},
	\end{equation}
	and equality holds if and only if $a_j=k-1 \ \forall j\in\mathbb{N}$. 
\end{lm}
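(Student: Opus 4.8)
The plan is to exploit the single structural fact that every digit satisfies $a_j \leq k-1$, which reduces the infinite series to a geometric one whose sum can be computed in closed form; the equality case then falls out of a nonnegativity argument.

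First I would bound the series termwise. Since $a_j \in \langle k-1 \rangle$ means $0 \leq a_j \leq k-1$ for every $j \in \mathbb{N}$, each term obeys $a_j k^{m-1-j} \leq (k-1)k^{m-1-j}$, whence
\[
\sum_{j=0}^{+\infty} a_j k^{m-1-j} \leq \sum_{j=0}^{+\infty}(k-1)k^{m-1-j} = (k-1)k^{m-1}\sum_{j=0}^{+\infty}k^{-j}.
\]
Because $k \in \mathbb{N}_2$ forces $0 < k^{-1} < 1$, the geometric series converges to $k/(k-1)$, and the right-hand side collapses to $(k-1)k^{m-1}\cdot \tfrac{k}{k-1} = k^m$. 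This establishes the inequality \eqref{k^m-ineq}, and as a byproduct it confirms that the original series converges, its terms being nonnegative and dominated by the convergent geometric series.

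For the equality characterization I would recast the gap between the two sides as a single series with nonnegative terms. Using the identity $k^m = \sum_{j=0}^{+\infty}(k-1)k^{m-1-j}$ obtained above, I can write
\[
k^m - \sum_{j=0}^{+\infty} a_j k^{m-1-j} = \sum_{j=0}^{+\infty}\left(k-1-a_j\right)k^{m-1-j}.
\]
Each summand is nonnegative, since $k-1-a_j \geq 0$ by hypothesis and $k^{m-1-j} > 0$ as $k \geq 2$. A series of nonnegative terms sums to zero precisely when every term is zero; as the factor $k^{m-1-j}$ never vanishes, equality is equivalent to $k-1-a_j = 0$, that is $a_j = k-1$, for all $j \in \mathbb{N}$. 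The "if" direction is then immediate by reading the same identity backwards.

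The main obstacle here, though a mild one, is purely analytic rather than arithmetic: one must settle the convergence of the infinite sum before rearranging or subtracting it (the geometric comparison does exactly this), and one must invoke the fact that a convergent nonnegative series vanishes only when all its terms vanish, rather than arguing from a single exceptional index as would suffice for a finite sum. Once convergence is secured, both claims follow from the termwise comparison with no further computation.
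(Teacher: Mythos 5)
Your proof is correct and takes essentially the same approach as the paper: both compare the series termwise against the saturated series $\sum_{j=0}^{+\infty}(k-1)k^{m-1-j}=k^m$ (the paper evaluates it by telescoping where you use the geometric series formula) and both obtain the equality characterization from the fact that any digit below $k-1$ produces a positive deficit. If anything, your nonnegative-difference-series argument for the equality case is slightly more explicit than the paper's ``clearly'' strict inequality.
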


\begin{proof}
	The series in \eqref{k^m-ineq} is clearly convergent. Let $a_j=k-1, \forall j\in\mathbb{N}$, then
	\begin{equation}
	\sum_{j=0}^{+\infty}(k-1) k^{m-1-j}=(k^m-k^{m-1})+(k^{m-1}-k^{m-1})+\cdots=k^m.
	\end{equation}
If $\exists j_0\in\mathbb{N}: \ a_{j_0}\neq k-1$, then clearly
\begin{equation}
k^m=\sum_{j=0}^{+\infty}(k-1) k^{m-1-j}>\sum_{j=0}^{+\infty}a_{j} k^{m-1-j}.
\end{equation} 	
	
\end{proof}

\begin{teo}[Infinite Basis Representation Theorem]\label{IBRT}
	Let $k\in\mathbb{N}_2$. Then, $\forall x\in\mathbb{R}^+$, there exists a unique infinite representation for $x$ with respect to the base $k$ as
	\begin{equation}\label{basekex}
	x=\sum_{j=0}^{+\infty}a_{p_i-j} k^{p_i-j},
	\end{equation}
	where $p_i\in\mathbb{Z}$ and $a_{p_i-j}\in\mathbb{N}$ such that $0\leq a_{p_i-j}< k$ for all $j\geq 0$, and $\forall N>0, \ \exists j_0 > N: a_{p_i - j_0} \neq 0$. Furthermore, defining
	\begin{equation}
	x_n:=\sum_{j=0}^{n}a_{p_i-j} k^{p_i-j},
	\end{equation}
	and $\{x_n\}_{n\in\mathbb{N}_0}=:S\subset\mathbb{R}$, $x$ is a limit point of $S$.
\end{teo}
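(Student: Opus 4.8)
The plan is to split the statement into three parts --- existence of a representation, its uniqueness, and the limit-point assertion --- and to note first that $x=0$ is degenerate (the non-termination clause forbids the all-zero digit string), so I assume $x>0$ throughout.

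\textbf{Existence.} I would pin down the leading exponent by choosing the unique $p_i\in\mathbb{Z}$ with $k^{p_i}<x\le k^{p_i+1}$ (equivalently $p_i=\lceil\log_k x\rceil-1$); the half-open choice on the right is exactly what forces the \emph{non}-terminating branch at integer powers of $k$. I would then build the digits greedily with a \emph{ceiling}-based rule rather than the naive floor rule: set $x_{-1}:=0$ and, having defined $x_n$, put
\begin{equation*}
a_{p_i-n-1}:=\left\lceil\frac{x-x_n}{k^{p_i-n-1}}\right\rceil-1,\qquad x_{n+1}:=x_n+a_{p_i-n-1}k^{p_i-n-1}.
\end{equation*}
The engine of the argument is the invariant $0<x-x_n\le k^{p_i-n}$, proved by induction: it holds at $n=-1$ by the choice of $p_i$, and the ceiling rule both keeps every digit in $\langle k-1\rangle$ and propagates the invariant, with the strict lower bound guaranteeing $x_n<x$ for every $n$. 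Since $k\ge 2$, the bound $x-x_n\le k^{p_i-n}\to 0$ yields $x_n\to x$, so the series sums to $x$; the strict inequality $x_n<x$ then enforces the non-termination clause automatically, because an eventually-zero digit string would make $x_n$ eventually constant and so unable to reach $x$. A one-line check that $x/k^{p_i}>1$ shows the leading digit satisfies $a_{p_i}\ge 1$.

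\textbf{Uniqueness.} The key observation is that any admissible representation already determines its leading exponent from $x$ alone. Indeed $x\ge a_pk^p\ge k^p$, with strict inequality because the non-termination clause supplies a later nonzero digit, while \autoref{k^m-geq} gives $x\le\sum_{j\ge 0}(k-1)k^{p-j}=k^{p+1}$; hence $k^p<x\le k^{p+1}$, which pins $p$ uniquely. With the leading exponents equal, I would compare two representations digitwise: supposing they first differ at index $n$ with $a_{p-n}>b_{p-n}$, the $a$-tail is $\ge(b_{p-n}+1)k^{p-n}$, whereas \autoref{k^m-geq} bounds the $b$-tail above by the \emph{same} quantity $(b_{p-n}+1)k^{p-n}$; equality would force all later $a$-digits to vanish, contradicting non-termination. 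Thus the two tails cannot coincide, a contradiction, and all digits agree.

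\textbf{Limit point and the main obstacle.} The limit-point claim is then immediate: by uniqueness the representation coincides with the one constructed above, so $0<x-x_n\le k^{p_i-n}$ gives $x_n\to x$ with $x_n\neq x$ for every $n$, whence $x$ is a limit point of $S$. I expect the genuine difficulty to lie not in the convergence but in the consistent enforcement of the non-termination clause: it is what dictates the strict/non-strict split $k^{p_i}<x\le k^{p_i+1}$, the ceiling (rather than floor) digit rule in existence, and the equality analysis via \autoref{k^m-geq} in uniqueness, and making all three choices align is where the care is required.
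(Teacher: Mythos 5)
Your proposal is correct and follows essentially the same route as the paper: a greedy digit-by-digit construction preserving the strict inequality $x_n<x$ (your ceiling formula is just a closed form of the paper's ``greatest coefficient such that $x_n<x$'' choice, and your invariant $0<x-x_n\le k^{p_i-n}$ streamlines its monotone-convergence/$\epsilon$ argument), with uniqueness derived from \autoref{k^m-geq} combined with the infinitely-many-nonzero-digits clause, which is exactly the mechanism of the paper's subtraction-and-induction argument repackaged as your first-difference comparison. The only caveat, shared with the paper's statement itself, is that your exponent-pinning step $k^p<x\le k^{p+1}$ tacitly assumes a nonzero leading digit, so uniqueness holds up to stripping leading zeros (the paper addresses this by allowing $p'_i>p_i$ and showing the surplus digits $b_{p_i+l-j}$ all vanish).
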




\begin{proof}
	Let $x\in\mathbb{R}^+$ and $k$ be a positive integer greater than $1$, then $\exists m\in\mathbb{Z}$ such that 
	$k^{m}<x$. Let $m_0$ be the greatest integer with such a property, that is, $m_0:=\max\{m\in\mathbb{Z}: k^m<x\}$ and $0\leq a_{m_0}<k$ be the greatest value for which it still holds $a_{m_0}k^{m_0}<x$. Once again, let us choose $0\leq a_{m_0-1}<k$ as the greatest value for which $a_{m_0}k^{m_0}+a_{m_0-1}k^{m_0-1}<x$. 
	Since $\mathbb{R}$ is densely ordered, the indefinite repetition of this process is valid and we might construct a non-decreasing sequence $\{x_n\}_{n\in\mathbb{N}}$ such that its general term $x_n$  is given by  
	\begin{equation}
	x_n:=\sum_{j=1}^{n}a_{m_0-j+1} k^{m_0-j+1}<x.
	\end{equation}
	
	Since $\{x_n\}_{n\in\mathbb{N}}$ is a non-decreasing sequence that is bounded from above by $x$, by the Monotone Convergence Theorem the sequence converges to its supremum. It still remains to show that $x=\sup_{n\in\mathbb{N}}\{x_n\}$. In this regard, let $\epsilon>0$, then, since $\mathbb{N}$ is unbounded from above, $\exists N_0\in\mathbb{N}$ such that
	\begin{equation}
	\epsilon>k^{m_0-N_0+1}.
	\end{equation}
	Thus,
	\begin{equation}\label{SN0+}
	x_{N_0}+\epsilon>x_{N_0}+k^{m_0-N_0+1}=a_{m_0}k^{m_0}+\cdots+(a_{m_0-N_0+1}+1)k^{m_0-N_0+1},
	\end{equation}
	However, by the very definition of the sequence $\{x_n\}_{n\in\mathbb{N}}$, $a_{m_0-N_0+1}$ is the greatest coefficient of $k^{m_0-N_0+1}$ for which $x_{N_0}<x$. Therefore, from equation \eqref{SN0+} we have $x_{N_0}+\epsilon > x$, that is, $x_{N_0} > x-\epsilon$. 
	Since $\epsilon$ is an arbitrary positive real number, it follows that $x=\sup_{n\in\mathbb{N}}\{x_n\}$. 
	Moreover, by construction $x_{N'} < x$ $\forall N' \in \mathbb{N}$ which means $x_{N'}\in (x-\epsilon,x+\epsilon)\setminus\{x\}$ $\forall N' \geq N_0$ which proves $x$ is a limit point of $\{x_n\}_{n\in\mathbb{N}}$.
	
	For the uniqueness of the representation, let 
		\begin{equation}\label{basekex'}
		x=\sum_{j=0}^{+\infty}b_{p'_i-j} k^{p'_i-j},
		\end{equation}
	be an infinite representation for $x$, with $p'_i\in\mathbb{Z}$ and $b_{p'_i-j}\in\mathbb{N}$ such that $0\leq b_{p'_i-j}< k$ for all $j\geq 0$, and $\forall N>0, \ \exists j_1 > N: b_{p'_i - j_1} \neq 0$. Suppose, without any loss of generality, that $p'_i>p_i$, then $\exists l\in\langle k-1 \rangle$ such that $p'_i=p_i+l$. Thus, subtracting \eqref{basekex} from \eqref{basekex'} we obtain
	\begin{align}
	0&=\sum_{j=0}^{+\infty}b_{p_i+l-j} k^{p_i+l-j}-\sum_{j=0}^{+\infty}a_{p_i-j} k^{p_i-j}\nonumber\\
	&=\sum_{j=0}^{l-1}b_{p_i+l-j} k^{p_i+l-j}+\sum_{j=0}^{+\infty}(b_{p_i-j}-a_{p_i-j}) k^{p_i-j}\label{x-x}.
	\end{align}	
	If we had $p'_i=p_i$, then equation \eqref{x-x} would still be valid with the omission of the first term on the right-hand side. Equation \eqref{x-x} may be rewritten as
	\begin{equation}
	b_{p_i+l}k^{p_i+l}+\cdots+b_{p_i+1}k^{p_i+1}=\sum_{j=0}^{+\infty}(a_{p_i-j}-b_{p_i-j}) k^{p_i-j}.
	\end{equation}
	From \autoref{k^m-geq}, we have
	\begin{equation}
	k^{p_i+l}>k^{p_i+l-1}>\cdots>k^{p_i+1}=\sum_{j=0}^{+\infty}(k-1) k^{p_i-j},
	\end{equation}
	Since the coefficients $b_{p_i+l-j}$ are all non-negative and $-(k-1)\leq a_{p_i-j}-b_{p_i-j}\leq k-1$,
	we must have $b_{p_i+l-j}=0$ for $0\leq j\leq l-2$ and
	\begin{equation}\label{bk}
	b_{p_i+1}k^{p_i+1}=\sum_{j=0}^{+\infty}(a_{p_i-j}-b_{p_i-j}) k^{p_i-j},
	\end{equation}
	where, either $b_{p_i+1}=1$, or $b_{p_i+1}=0$. In the first case, according to \autoref{k^m-geq}, $a_{p_i-j}-b_{p_i-j}=k-1, \ \forall j\in\mathbb{N}$, which is equivalent to the assertion that $a_{p_i-j}=k-1$ and $b_{p_i-j}=0$ for all $j\in\mathbb{N}$, which in turn contradicts the hypothesis that $\forall N>0, \ \exists j_1 > N: b_{p'_i - j_1} \neq 0$. Therefore, we must have $b_{p_i+1}=0$. We want to prove that  $a_{p_i-j}=b_{p_i-j}, \ \forall j\in\mathbb{N}$ and we shall do so by induction on $j$. Let us prove the initial case, that is, for $j=0$. Suppose $b_{p_i}>a_{p_i}$, then equation \eqref{bk} can be rewritten as
	\begin{equation}\label{ind-j=0}
(b_{p_i}-a_{p_i})k^{p_i}=\sum_{j=1}^{+\infty}(a_{p_i-j}-b_{p_i-j}) k^{p_i-j},
	\end{equation}
	From \autoref{k^m-geq}, for equation \eqref{ind-j=0} to hold, either $(b_{p_i}-a_{p_i})=1$, or $(b_{p_i}-a_{p_i})=0$. In the first case, it follows that
	$a_{p_i-j}=k-1$ and $b_{p_i-j}=0$ for all $j\in\mathbb{N}_1$, which again contradicts the hypothesis that $\forall N>0, \ \exists j_1 > N: b_{p'_i - j_1} \neq 0$. The case $a_{p_i}>b_{p_i}$ is completely analogous and also leads to contradiction. Therefore, $b_{p_i}=a_{p_i}$. For the final step of the induction, suppose $a_{p_i-j}=b_{p_i-j}, \ \forall j\in\langle n\rangle $, where $n\in\mathbb{N}$, then from equation \eqref{bk} it follows
	\begin{equation}
	\sum_{j=0}^{+\infty}(a_{p_i-j}-b_{p_i-j}) k^{p_i-j}=\sum_{j=n+1}^{+\infty}(a_{p_i-j}-b_{p_i-j}) k^{p_i-j}=0,
	\end{equation}
	which may be rewritten as
	\begin{equation}\label{ind-j=n}
	(b_{p_i-n-1}-a_{p_i-n-1})k^{p_i-n-1}=\sum_{j=0}^{+\infty}(a_{p_i-n-2-j}-b_{p_i-n-2-j}) k^{p_i-n-2-j}
	\end{equation}
	Again from \autoref{k^m-geq}, for equation \eqref{ind-j=n} to hold, either $(b_{p_i-n-1}-a_{p_i-n-1})=1$, or $(b_{p_i-n-1}-a_{p_i-n-1})=0$. In the first case, it follows that
	$a_{p_i-j}=k-1$ and $b_{p_i-j}=0$ for all $j\in\mathbb{N}_{n+2}$, which again contradicts the hypothesis that $\forall N>0, \ \exists j_1 > N: b_{p'_i - j_1} \neq 0$. Therefore, $b_{p_i-n-1}=a_{p_i-n-1}$, hence $a_{p_i-j}=b_{p_i-j}, \ \forall j\in\langle n+1\rangle $ and the final step of the induction is done.
	
	

\end{proof}

\begin{rem}\label{repreg}

		The Infinite Basis Representation Theorem allows a unique representation in a given basis $k\in\mathbb{N}_2$ for any positive real number $x$ in such a way that there are infinitely many non-zero digits, that is, $q:=[a_{p_i}\cdots a_0,a_{-1}\cdots]_k$. In the case of a positive rational number, however, there is always a finite string of digits that repeats itself indefinitely and so, a finite amount of numbers is enough to describe it. In the case $q\in\mathbb{Q}^+$, we shall use the following notation  
\begin{align}
q&=[a_{p_i}\cdots a_{0},\cdots a_{-\rho_0}\overline{a_{\rho_0-1}\cdots a_{-\rho_0-T}}]_k\\
&=[a_{p_i}\cdots a_{0},\cdots a_{-\rho_0}]_k+[ 0,\cdots \overline{a_{\rho_0-1}\cdots a_{-\rho_0-T}}]_k\\
&=:Reg_k(q)+\overline{Rep_k(q)},
\end{align}		
where the string of digits under the horizontal line is the repeating portion of $q$, called its \textbf{repetend} and denoted by $Rep_k(q)$, $T\in\mathbb{N}$ is the number of digits in the repeating cycle, called the \textbf{period} or \textbf{length of the repetend}, and $Reg_k(q)$ denotes the non-repeating string of digits of $q$, called its \textbf{regular part}. Clearly, there is no possibility of confusion between a finite and an infinite representation, since in the latter $T\geq 1$ and a horizontal line over a finite string of digits is always present. In the former case though, we might assume $T=0$, where $q$ reduces to  $q:=[a_{p_i}\cdots a_{0},\cdots a_{-\rho_0}]_k$, and the digits are now considered with respect to its finite representation to the base $k$. Furthermore, if $q\in\mathbb{Q}_{T_k}$, then $q$ has both a finite and an infinite representation to the base $k$, which are evidently distinct from one another. As an example, $[4.25]_6=[4.24\overline{5}]_6$.
		
\end{rem}

\subsection{Digital sum, additive persistence and digital root}\label{dig root and dig sum}

\begin{df}
	Let $n$ be a natural number to the base $k$. Then, the function $d_k:\mathbb{N}\rightarrow\mathbb{N}$ defined by
	\begin{equation}
	d_k(n)=d_k([a_{p_i}\cdots a_{0}]_k):=\sum_{j=0}^{p_i}a_{p_i-j}
	\end{equation}
is called the \textbf{digital sum} of $n$ with respect to the base $k$.
\end{df}

\begin{ex} \
	\begin{enumerate}[(i)]
		\item
		$d_{10}([7205]_{10})=d_{10}(7205)=7+2+0+5=[14]_{10}=14$
		\item $d_6([425]_6)=4+2+5=11=[15]_6$
		\item $
		d_2([101011]_2)=1+0+1+0+1+1=4=[100]_2$
		\item $d_{16}([2A7E]_{16})=2+A+7+E=[21]_{16}$
	\end{enumerate}
\end{ex}

\begin{pro}
	Let $n$ be a natural number to the base $k$. Then, $d_k(n)\leq n$ and equality holds if and only if $n$ is a single digit number to the base $k$. 
\end{pro}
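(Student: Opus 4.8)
The plan is to unpack the definition of the digital sum against the base-$k$ representation and compare the two expressions term by term. Since $n$ is a natural number, its minimum exponent is $\rho_0=0$, so by the Finite Basis Representation Theorem for natural numbers (the $\rho_0=0$ case of \autoref{FBRT}) we may write $n=[a_{p_i}\cdots a_0]_k=\sum_{j=0}^{p_i}a_{p_i-j}\,k^{p_i-j}$, while by definition $d_k(n)=\sum_{j=0}^{p_i}a_{p_i-j}$. The two sums have exactly the same number of terms, so the whole problem reduces to a comparison of matching summands.

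First I would establish the inequality. For each index $0\le j\le p_i$ the exponent $p_i-j$ is a non-negative integer, and since $k\ge 2$ we have $k^{p_i-j}\ge 1$. As every digit $a_{p_i-j}$ is non-negative, multiplying by this factor cannot decrease it, giving $a_{p_i-j}\le a_{p_i-j}\,k^{p_i-j}$ for each $j$. Summing these inequalities over $j$ yields $d_k(n)\le n$ at once.

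For the equality characterization I would examine when each individual bound is tight. The factor $k^{p_i-j}$ equals $1$ only for $j=p_i$ (the units digit) and is strictly larger than $1$ whenever $p_i-j\ge 1$; hence for every $j<p_i$ the equality $a_{p_i-j}=a_{p_i-j}\,k^{p_i-j}$ forces $a_{p_i-j}=0$. Thus $d_k(n)=n$ would require all digits above the units place to vanish. The converse direction is immediate: if $n=[a_0]_k$ is a single digit then $d_k(n)=a_0=n$.

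The one point demanding care is closing the equality argument, since I must rule out the possibility that a multi-digit number happens to have all its higher digits equal to zero. This is precisely where the uniqueness clause of the Finite Basis Representation Theorem is needed: it guarantees that the leading digit $a_{p_i}\neq 0$ whenever $p_i>0$. Combined with the conclusion that every digit above the units place must be zero, this forces $p_i=0$, so $n$ is a single digit, completing the characterization.
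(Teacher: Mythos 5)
Your proposal is correct and follows essentially the same route as the paper: termwise comparison of $\sum_j a_{p_i-j}$ with $\sum_j a_{p_i-j}k^{p_i-j}$ using $k^{p_i-j}\geq 1$. In fact your treatment of the equality case is slightly more careful than the paper's, which asserts equality forces $p_i-j=0$ for all $j$ without explicitly invoking the nonvanishing of the leading digit $a_{p_i}$ to rule out a multi-digit number whose higher digits are all zero --- a point you correctly identify and close via the uniqueness clause of \autoref{FBRT}.
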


\begin{proof}
	Using the definition of the function $d_k$ it is straightforward that
	\begin{equation}
	d_k(n)=\sum_{j=0}^{p_i} a_{p_i-j}\leq\sum_{j=0}^{p_i}a_{p_i-j}k^{p_i-j}=n.
	\end{equation}
	and clearly equality holds if and only if $p_i-j=0$ for $0\leq j\leq p_i$, which is equivalent to stating that $p_i=0$. Thus, $n=a_0=\sum_{j=0}^{0}a_{p_i-j}=d_k(n)$. 
\end{proof}

\begin{rem}
	Since $d_k(n)\in\mathbb{N}$ we might calculate its digital sum with respect to the base $k$, that is, $d_k([d_k(n)]_k)\leq d_k(n)\leq n$ and this process might be iterated as many times as we please by composing the digital sum function with itself multiple times. For convenience we shall define the $N$-times composition of $d_k$ with itself as
	\begin{equation}
	 d_k^{(N)}(n):=\underset{N \text{-times}}{(d_k\circ\cdots\circ d_k})(n).
	\end{equation}
	Moreover, it is clear that from the definition of $d_k$ and from the fact that $\mathbb{N}$ is bounded from bellow, given a basis $k$, $\forall n\in\mathbb{N},\exists N\in\mathbb{N}:\ d_k^{(N)}(n)=b_0$, where $b_0$ is a single digit number to the base $k$, that is $0\leq b_0\leq k-1$.
\end{rem}

\begin{df}
Let $n\in\mathbb{N}$.Then, the function $\mathcal{A}_k:\mathbb{N}\rightarrow\mathbb{N}$ such that $\mathcal{A}_k(n):=\min\{N\in\mathbb{N}: 0\leq d_k^{(N)}(n)\leq k-1\}$ is called the \textbf{additive persistence} of $n$ with respect to the base $k$.
\end{df}
\begin{rem}
	The additive persistence function gives the minimum number of times we need to add the digits of a particular number recursively until we obtain a single digit number.
\end{rem}	

\begin{ex} \
	\begin{enumerate}[(i)]
		\item
		$d_{10}(7205)=14$ and $ d_{10}(14)=5$ $\implies$ $ \mathcal{A}_{10}(7205)=2$;
		\item $d_6([425]_6)=[15]_6$, $d_6([15]_6)=6=[10]_6$, and $d_6([10]_6)=[1]_6$ $\implies$ $\mathcal{A}_6([425]_6)=3$;
		\item $
		d_2([101011]_2)=[100]_2$ and $d_2([100]_2)=[1]_2$ $\implies$ $\mathcal{A}_2([101011]_2)=2$.
		\item $d_{16}([2A7E]_{16})=2+A+7+E=[21]_{16}$ and $d_{16}([21]_{16})=[3]_{16}$ $\implies$ $\mathcal{A}_{16}([2A7E]_{16})=2$.
	\end{enumerate}
\end{ex}

\begin{df}\label{root1}
The function $r_k: \mathbb{N}\rightarrow \langle k-1\rangle$ defined by $r_k(n):=d_k^{\mathcal{A}_k(n)}(n)$ is called the \textbf{digital root} of $n$ to the base $k$.
\end{df}	
\begin{ex}From  the previous example it follows that $r_{10}(7205)=5$, $r_6([425]_6)=[1]_6$ and $r_2([101011]_2)=[1]_2$.
	
\end{ex}
Some of the previous results may be easily extended from natural to non-repeating rational numbers. 
\begin{df}\label{rootsum2}
	Let $\Dt_k:\mathbb{Q}^+_{T_k}\rightarrow\mathbb{N}$ and $\bar{r}_k:\mathbb{Q}^+_{T_k}\rightarrow\langle k-1 \rangle$ be defined as $\Dt_k(q):=d_k(k^{\rho_0}q)$ and $\bar{r}_k(q):=r_k(k^{\rho_0}q)$ where $\rho_0$ is the minimum exponent of $q$ to the base $k$, then $\Dt_k$ and $\bar{r}_k$ are called the \textbf{terminating fractional digital sum} (TFDS) and the \textbf{terminating fractional digital root} (TFDR) of $q\in\mathbb{Q}^+_{T_k}$ to the base $k\in\mathbb{N}_2$.
\end{df}

\begin{ex} 
	\begin{enumerate}[(i)]
		\item
		$\Dt_{10}([72.05]_{10})=d_{10}(10^2\cdot72.05)=d_{10}(7205)=14$ and analogously $\bar{r}_{10}([72.05]_{10})=r_{10}(10^2\cdot72.05)=r_{10}(7205)=5$
		\item $
		\Dt_2([1.01011]_2)=d_2(2^5[1.01011]_2)=d_2([101011]_2)=[100]_2$ and analogously $\bar{r}_2([1.01011]_2)=r_2(2^5[1.01011]_2)=r_2([101011]_2)=[1]_2$.
	\end{enumerate}
\end{ex}

\section{Terminating Fractionals and Digital Root}\label{semdizima}

%

\begin{lm}\label{lemadkrk}
Let $q\in\mathbb{Q}^+_{T_k}$, then $\Dt_k(q)\equiv \bar{r}_k(q) \ (\text{mod}\ k-1)$.
\end{lm}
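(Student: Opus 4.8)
The plan is to reduce the statement about $\Dt_k$ and $\bar r_k$ to the single classical congruence $d_k(n)\equiv n\ (\text{mod}\ k-1)$ for natural numbers and then to invoke transitivity of congruence. Writing $N:=k^{\rho_0}q\in\mathbb{N}$, where $\rho_0$ is the minimum exponent of $q$ to the base $k$, the definitions of TFDS and TFDR give $\Dt_k(q)=d_k(N)$ and $\bar r_k(q)=r_k(N)$. Thus it suffices to prove $d_k(N)\equiv r_k(N)\ (\text{mod}\ k-1)$ for an arbitrary $N\in\mathbb{N}$, and the problem becomes purely one about natural numbers.

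First I would establish the key congruence $d_k(n)\equiv n\ (\text{mod}\ k-1)$. Since $k=(k-1)+1\equiv 1\ (\text{mod}\ k-1)$, we have $k^{j}\equiv 1\ (\text{mod}\ k-1)$ for every $j\geq 0$; writing $n=\sum_{j=0}^{p_i}a_{p_i-j}k^{p_i-j}$ in its base-$k$ representation and reducing modulo $k-1$ term by term then yields $n\equiv\sum_{j=0}^{p_i}a_{p_i-j}=d_k(n)\ (\text{mod}\ k-1)$.

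Next I would iterate this. Because $d_k(m)\equiv m\ (\text{mod}\ k-1)$ holds for every $m\in\mathbb{N}$, a straightforward induction on the number of compositions gives $d_k^{(M)}(n)\equiv n\ (\text{mod}\ k-1)$ for all $M\in\mathbb{N}$: the base case $M=0$ is trivial, and the inductive step follows from $d_k^{(M+1)}(n)=d_k(d_k^{(M)}(n))\equiv d_k^{(M)}(n)\equiv n$. Taking $M=\mathcal{A}_k(N)$ yields $r_k(N)\equiv N\ (\text{mod}\ k-1)$, while the case $M=1$ (equivalently, the congruence of the previous paragraph applied directly to $N$) yields $d_k(N)\equiv N\ (\text{mod}\ k-1)$. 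Transitivity then closes the argument: $\Dt_k(q)=d_k(N)\equiv N\equiv r_k(N)=\bar r_k(q)\ (\text{mod}\ k-1)$.

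There is no genuine obstacle here; the result is essentially the familiar ``casting out $k-1$'' fact dressed up through the minimum-exponent rescaling. The only points requiring care are the reduction to $N\in\mathbb{N}$ via $\rho_0$ (so that $d_k$ and $r_k$ act on an honest natural number, as demanded by \autoref{rootsum2}), the bookkeeping of the induction on the number of iterations of $d_k$, and the degenerate case in which $N$ is already a single-digit number, where $\mathcal{A}_k(N)=0$ and $r_k(N)=N$, so the congruence holds trivially.
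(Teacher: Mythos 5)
Your proof is correct and follows essentially the same route as the paper's: both rest on the casting-out-$(k-1)$ congruence $d_k(n)\equiv n \pmod{k-1}$, obtained from $k^j\equiv 1 \pmod{k-1}$ applied termwise to the base-$k$ expansion, followed by induction on the number of iterations of the digit sum and specialization to $M=\mathcal{A}_k$. The only cosmetic difference is that you anchor the chain of congruences at $N=k^{\rho_0}q$ itself (making the reduction to natural numbers via \autoref{rootsum2} explicit), whereas the paper anchors it at the residue $\gamma$ of $\Dt_k(q)$; the mathematical content is identical.
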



\begin{proof}
	If $q=0$, then evidently $\Dt_k(q)=\bar{r}_k(q)=0$,and in particular, $\Dt_k(q)\equiv \bar{r}_k(q) \ (\text{mod}\ k-1)$. Thus, let $q\in\mathbb{Q}^+_{T_k}\setminus\{0\}$ with $\Dt_k(q)\equiv \gamma \ (\text{mod}\ k-1)$ where $0\leq \gamma<k-1$, and $q=[\alpha_{p_i}\cdots\alpha_{p_i-\rho_0},\cdots\alpha_{0}]_k$ be the representation of $q$ to the base $k$, then by definition, 
	\begin{equation}
	\Dt_k(q)=\Dt_k([\alpha_{p_i}\cdots\alpha_{p_i-\rho_0},\cdots\alpha_{0}]_k)=\sum_{j=0}^{p_i}\alpha_{p_i-j}=:q'\in\mathbb{N}. 
	\end{equation}
	Therefore, $\Dt_k(q)\equiv \gamma \ (\text{mod}\ k-1)$ if and only if $q'\equiv \gamma \ (\text{mod}\ k-1)$. Again by the Finite Representation Theorem we may write $q'=[\alpha'_{p'_i}\cdots\alpha'_{0}]_k$ from which it follows that	
	\begin{equation}
	q'':=\Dt_k(q')=\sum_{j=0}^{p'_i}\alpha'_{p'_i-j}\equiv\sum_{j=0}^{p'_i}\alpha'_{p'_i-j}k^{p_i-j} \ (\text{mod}\ k-1)=q'. 
	\end{equation} 
	Thus, $q''\equiv q' \ (\text{mod}\ k-1)\equiv \gamma \ (\text{mod}\ k-1)$, that is $\Dt_k(\Dt_k(q))\equiv \gamma \ (\text{mod}\ k-1)$. By induction it follows that $\forall n\in\mathbb{N}_1, \ \Dt^{(n)}_k(q)\equiv \gamma \ (\text{mod}\ k-1)$ and particularly $\bar{r}_k(q)=\Dt^{\mathcal{A}_k(q)}_k(q)\equiv \gamma \ (\text{mod}\ k-1)$.
\end{proof}



\begin{rem}
	Notice that from the congruence relation $\bar{r}_k(q)\equiv \gamma \ (\text{mod}\ k-1)$ it follows that $\bar{r}_k(q)=\gamma+l(k-1)$, where $l\in\mathbb{Z}$ and since the digital root is a positive single digit number
	
		\[ \bar{r}_k(q) = 
	\begin{cases} 
	\gamma, & \text{if} \quad 0<\gamma<k-1 \\
	k-1, & \text{if} \quad \gamma=0 \ \wedge \ l=1 \\
	0, & \text{if} \quad \gamma=0 \ \wedge \ l=0 \\
	\end{cases}
	.\]
	It is also evident that $\bar{r}_k(q)=0\Leftrightarrow\Dt_k(q)=0\Leftrightarrow  q=0$. From which it follows that $\bar{r}_k(q) =k-1\Leftrightarrow \Dt_k(q)$ is a multiple of $k-1$ and also if $q\in\mathbb{N}$, then it is a multiple of $k-1$.
\end{rem}
	
\begin{teo}\label{main1}
	Let $q\in\mathbb{Q}^+_{T_k}\setminus\{0\}$ and $r\in\mathbb{N}_2$ such that $r$ is a proper divisor of $k$, then $\exists\delta_i\in\mathcal{D}_{k-1}$
	\begin{equation}\label{maineq1}
 \bigg\{\overline{\bar{r}_k\left(\frac{q}{r^j}\right)}\bigg\}_{j\in\mathbb{N}}\subseteq\Gamma^{k-1}_{\delta_i}.
	\end{equation}
\end{teo}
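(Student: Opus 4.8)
The plan is to translate the entire statement into a single orbit computation inside $\mathbb{Z}_{k-1}$, showing that the sequence $\{\overline{\bar r_k(q/r^j)}\}_{j}$ is nothing but a $\Gamma_1^{k-1}$-translate of one fixed residue class. First I would check that every term is even defined: writing $q=n/m$ in lowest terms, \autoref{teofraciff} tells us that every prime factor of $m$ divides $k$; since $r\mid k$, every prime factor of $mr^{j}$ also divides $k$, so $q/r^{j}\in\mathbb{Q}^{+}_{T_k}$ and $\bar r_k(q/r^{j})$ is well-defined by \autoref{rootsum2}.

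The arithmetic heart is the casting-out-$(k-1)$ congruence. Because $k\equiv 1\ (\text{mod}\ k-1)$, every power of $k$ is $\equiv 1$, so for any $p\in\mathbb{Q}^{+}_{T_k}$ with minimum exponent $\rho$ one has $\Dt_k(p)=d_k(k^{\rho}p)\equiv k^{\rho}p\ (\text{mod}\ k-1)$ — this is exactly the digit-versus-value reduction already used inside the proof of \autoref{lemadkrk}. Combined with \autoref{lemadkrk} this yields $\bar r_k(p)\equiv k^{\rho}p\ (\text{mod}\ k-1)$. Applying it to $p=q/r^{j}$, and letting $\rho_j$ be the minimum exponent of $q/r^{j}$ with $Q_j:=k^{\rho_j}q/r^{j}\in\mathbb{Z}$, I obtain $\overline{\bar r_k(q/r^{j})}=\overline{Q_j}$ in $\mathbb{Z}_{k-1}$. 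I write $Q:=Q_0=k^{\rho_0}q$.

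The next step links all the $Q_j$ to this one integer $Q$. A short minimality argument shows $\rho_j\geq \rho_0$: if $\rho<\rho_0$ then $k^{\rho}q$ is a non-integer whose lowest-terms denominator exceeds $1$, and dividing it by $r^{j}$ can never produce an integer. Hence $r^{j}Q_j=k^{\rho_j}q=k^{\rho_j-\rho_0}Q$ with $k^{\rho_j-\rho_0}$ an honest nonnegative power of $k$. Reducing modulo $k-1$ and using $k^{\rho_j-\rho_0}\equiv 1$ gives the clean identity $\overline{r}^{\,j}\,\overline{Q_j}=\overline{Q}$ in $\mathbb{Z}_{k-1}$. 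Since $r$ divides $k$, combining \autoref{kk-1} and \autoref{gcdrk-1} gives $\gcd(r,k-1)=1$, so $\overline r\in\Gamma_1^{k-1}$ and therefore $\overline{r}^{\,j}\in\Gamma_1^{k-1}$ is a unit; inverting it yields $\overline{\bar r_k(q/r^{j})}=\overline{Q_j}=(\overline{r}^{\,j})^{-1}\,\overline{Q}\in\Gamma_1^{k-1}\cdot\overline{Q}$.

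Finally I would read off the orbit. Every term of the sequence lies in the single orbit $\Gamma_1^{k-1}\cdot\overline{Q}$ of the action of \autoref{G-action}, and by \autoref{orbiteq} this orbit equals $\Gamma^{k-1}_{\delta_i}$ where $\delta_i:=\gcd(Q,k-1)\in\mathcal{D}_{k-1}$ (a gcd that depends only on the residue class, hence equals $\gcd(\bar r_k(q),k-1)$). This exhibits the required divisor and proves \eqref{maineq1}. The main obstacle I anticipate is the exponent bookkeeping behind $r^{j}Q_j=k^{\rho_j-\rho_0}Q$: controlling the minimum exponents $\rho_j$ and confirming $\rho_j\geq\rho_0$ is the one place where the terminating-fractional hypothesis and the divisibility $r\mid k$ must be used with care, whereas once that identity is in hand everything reduces to a formal unit-and-orbit manipulation.
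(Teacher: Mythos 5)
Your proposal is correct, and its arithmetic core is the same as the paper's: both arguments reduce everything to the single congruence $r^{j}\,\bar r_k\!\left(q/r^{j}\right)\equiv\bar r_k(q)\ (\text{mod}\ k-1)$ (the paper's equation \eqref{last-eq}), obtained by casting out $k-1$ from base-$k$ expansions together with \autoref{lemadkrk}, and both then feed this into the orbit machinery of \autoref{G-action} and \autoref{orbiteq} using $\overline{r}\in\Gamma^{k-1}_{1}$ (\autoref{kk-1}, \autoref{gcdrk-1}). The packaging differs: the paper fixes the orbit $\Gamma^{k-1}_{\delta_0}$ containing $\overline{\bar r_k(q)}$ and argues by contradiction, invoking invariance of orbits under the $\Gamma^{k-1}_{1}$-action and disjointness of the partition, whereas you invert the unit $\overline{r}^{\,j}$ and conclude directly that $\overline{\bar r_k(q/r^{j})}\in\Gamma^{k-1}_{1}\cdot\overline{\bar r_k(q)}=\Gamma^{k-1}_{\delta_i}$. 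Your direct route buys two things. First, it exhibits the divisor explicitly, $\delta_i=\gcd(\bar r_k(q),k-1)$, where the theorem (and the paper's proof) only asserts existence. Second, your exponent bookkeeping is genuinely tighter: writing $Q:=k^{\rho_0}q$ and $Q_j:=k^{\rho_j}q/r^{j}$, your observation that $\rho_j\geq\rho_0$ (dividing a non-integer in lowest terms by $r^{j}$ cannot make it integral) gives $r^{j}Q_j=k^{\rho_j-\rho_0}Q$ with a nonnegative power of $k$, so all congruences take place in $\mathbb{Z}$; in the paper's corresponding reduction the exponents $p'_i+\rho_0-\rho'_0-l$ can be negative (indeed $\rho'_0\geq\rho_0$, not the reverse), so its congruence step is implicitly one of rationals and your arrangement repairs that. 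You also verify the tacitly assumed fact that $q/r^{j}\in\mathbb{Q}^+_{T_k}$ via \autoref{teofraciff}; a one-line alternative is $k^{\rho_0+j}(q/r^{j})=k^{\rho_0}q\,(k/r)^{j}\in\mathbb{Z}$, which is available since $r$ divides $k$.
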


\begin{proof}
		Let us denote $\bar{r}_k(q/r^j)=:R_j$. Since, by \autoref{G-action}, the collection of orbits  $\{\Gamma_{\delta}^{k-1}\}_{\delta\in\mathcal{D}_{k-1}}$ partitions the set $\mathbb{Z}_{k-1}$, given $j\in\mathbb{N}$,  $\exists\delta\in\mathcal{D}_{k-1}$ such that $\overline{\bar{r}_k\left(\frac{q}{r^{j}}\right)}=\overline{R}_j\in\Gamma_{\delta}^{k-1}$.  Let $\Gamma_{\delta_0}^{k-1}$ be the orbit containing $\overline{R}_0=\overline{\bar{r}_k(q)}$. We shall prove that $\forall j\in\mathbb{N}$, $\overline{R}_j\in\Gamma_{\delta_0}^{k-1}$. Suppose, by contradiction, $\exists j_1\neq 0: \ \overline{R}_{j_1}\notin\Gamma_{\delta_0}^{k-1}$, that is, suppose $\exists \delta_1\in\mathcal{D}_{k-1}$ with $\delta_0\neq\delta_1$ such that $\overline{R}_{j_1}\in\Gamma_{\delta_1}^{k-1}$. Denoting $\frac{q}{r^j}=:q_j$, according to \autoref{FBRT}, both $q_{0}=q$ and $q_{j_1}$ may be uniquely represented to the base $k$ as
	
	\begin{equation}\label{q0}
	q_{0}=q=\sum_{l=0}^{p_i}\alpha_{p_i-l} k^{p_i-\rho_0-l},
	\end{equation}
and
	\begin{equation}\label{qj1}
	q_{j_1}=\frac{q}{r^{j_1}}=\sum_{l=0}^{p'_i}\alpha'_{p'_i-l} k^{p'_i-\rho'_0-l},
	\end{equation}
where $\rho_0$ and $\rho'_0$ are the minimum exponents of $q_0$ and $q_{j_1}$ to the base $k$, respectively. Since $\rho_0$ is the minimum exponent of $q$, equations \eqref{q0} and \eqref{qj1} imply 

	\begin{equation}\label{}
	k^{\rho_0}q_{0}=k^{\rho_0}q=\sum_{l=0}^{p_i}\alpha_{p_i-l} k^{p_i-l}\equiv \sum_{l=0}^{p_i}\alpha_{p_i-l}  \ (\text{mod}\ k-1),
	\end{equation}
that is, 
	\begin{equation}\label{q0'}
	k^{\rho_0}q\equiv \Dt_k\left(q_0\right)  \ (\text{mod}\ k-1),
	\end{equation}
and
	\begin{equation}\label{}
	k^{\rho_0}q=r^{j_1}\sum_{l=0}^{p'_i}\alpha'_{p'_i-l} k^{p'_i+\rho_0-\rho'_0-l}\equiv r^{j_1} \sum_{l=0}^{p'_i}\alpha'_{p'_i-l}  \ (\text{mod}\ k-1),
	\end{equation}
that is,
	\begin{equation}\label{qj1'}
	k^{\rho_0}q\equiv r^{j_1}\Dt_k\left(q_{j_1}\right)  \ (\text{mod}\ k-1).
	\end{equation}	
Moreover, by \autoref{lemadkrk},  $\Dt_k\left(q_0\right)\equiv R_0 \ (\text{mod}\ k-1)$ and $\Dt_k\left(q_{j_1}\right)\equiv R_{j_1} \ (\text{mod}\ k-1)$. Thus, from equations \eqref{q0'} and \eqref{qj1'} it follows that 
	\begin{equation}\label{last-eq}
	r^{j_1}R_{j_1}\equiv R_0\ (\text{mod}\ k-1). 
	\end{equation}	
	
Now, since $r\in\mathcal{D}_{k}$, from \autoref{gcdrk-1},  $\forall j\in\mathbb{N}, \text{gcd} \ (r^j,k-1)=1$, from which it follows, particularly, that $\forall j\in\mathbb{N},\overline{r^j}\in\Gamma_1^{k-1}$. Moreover, since the action  $\phi: \Gamma_1^{k-1}\times\mathbb{Z}_{k-1}\rightarrow\mathbb{Z}_{k-1}$, given by $\phi(g,x):=gx$ defined in \autoref{G-action} leaves the orbits invariant,	$\overline{r}^{j_1}\overline{R}_{j_1}\in\Gamma_{\delta_1}^{k-1}$, which is a contradiction since from equation \eqref{last-eq} $\overline{r}^{j_1}\overline{R}_{j_1}=\overline{R}_0\in\Gamma_{\delta_0}^{k-1}$ and the orbits are disjoint. 


\end{proof}

\begin{cor}\label{cor1}
	Let $q\in\mathbb{Q}^+_{T_k}\setminus\{0\}$ and $r\in\mathbb{N}_2$ such that $r$ is a proper divisor of $k$. If $\bar{r}_k(q)\equiv 0 \ (\text{mod}\ k-1)$, then $\bar{r}_k(q/r)\equiv 0 \ (\text{mod}\ k-1)$.
\end{cor}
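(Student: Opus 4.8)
The plan is to derive the corollary directly from \autoref{main1} by pinning down the orbit to which $\overline{\bar{r}_k(q)}$ belongs. First I would exploit the hypothesis $q\neq 0$, which is essential: by the remark following \autoref{lemadkrk} one has $\bar{r}_k(q)=0\Leftrightarrow q=0$, so $q\neq 0$ forces $\bar{r}_k(q)\neq 0$. Since the digital root takes values in $\langle k-1\rangle=\{0,1,\dots,k-1\}$ and the only multiples of $k-1$ in this set are $0$ and $k-1$, the congruence $\bar{r}_k(q)\equiv 0\ (\text{mod}\ k-1)$ together with $\bar{r}_k(q)\neq 0$ leaves the single possibility $\bar{r}_k(q)=k-1$.

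Next I would pass to the ring $\mathbb{Z}_{k-1}$. Here $\overline{\bar{r}_k(q)}=\overline{k-1}=\overline{0}$, and since $\text{gcd}(0,k-1)=k-1$ this class sits in $\Gamma^{k-1}_{k-1}$. Because $(k-1)\mid\delta$ holds in $\mathbb{Z}_{k-1}$ exactly when $\overline{\delta}=\overline{0}$, this orbit is the singleton $\Gamma^{k-1}_{k-1}=\{\overline{0}\}$ (precisely as in \autoref{Z9} for $k-1=9$).

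Finally I would invoke \autoref{main1}, which guarantees that the whole family $\{\overline{\bar{r}_k(q/r^j)}\}_{j\in\mathbb{N}}$ lies in one and the same orbit $\Gamma^{k-1}_{\delta_i}$. Since the $j=0$ term equals $\overline{\bar{r}_k(q)}=\overline{0}$, and the orbits are pairwise disjoint by \autoref{G-action}, that common orbit must be $\Gamma^{k-1}_{k-1}=\{\overline{0}\}$. Taking $j=1$ then gives $\overline{\bar{r}_k(q/r)}\in\{\overline{0}\}$, i.e. $\bar{r}_k(q/r)\equiv 0\ (\text{mod}\ k-1)$, as desired.

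The argument presents essentially no obstacle, being a specialization of \autoref{main1} to the zero orbit; the only point demanding care is the use of $q\neq 0$ to rule out the degenerate value $\bar{r}_k(q)=0$, which is exactly what correctly locates $\overline{\bar{r}_k(q)}$ in $\Gamma^{k-1}_{k-1}$ and renders the conclusion unambiguous.
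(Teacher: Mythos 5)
Your proposal is correct and follows essentially the same route as the paper's own proof: invoke \autoref{main1} to place the whole family $\{\overline{\bar{r}_k(q/r^j)}\}_{j\in\mathbb{N}}$ in a single orbit, note that the $j=0$ class is $\overline{0}\in\Gamma^{k-1}_{k-1}=\{\overline{0}\}$, and read off the conclusion at $j=1$. One inessential remark: your detour through $\bar{r}_k(q)=k-1$ is not needed, since $\overline{\bar{r}_k(q)}=\overline{0}$ in $\mathbb{Z}_{k-1}$ follows directly from the hypothesis $\bar{r}_k(q)\equiv 0\ (\text{mod}\ k-1)$ whether the root is $0$ or $k-1$; the real role of $q\neq 0$ is simply to satisfy the hypothesis $q\in\mathbb{Q}^+_{T_k}\setminus\{0\}$ of \autoref{main1}.
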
	
	
\begin{proof}
	From \autoref{main1} $\overline{\bar{r}_k(q)}\in\Gamma_{k-1}^{k-1}=\{\overline{0}\}$ and consequently $\overline{\bar{r}_k(q/r)}\in\Gamma_{k-1}^{k-1}=\{\overline{0}\}$, that is $\bar{r}_k(q/r)\equiv 0 \ (\text{mod}\ k-1)$.
\end{proof}	
	
\begin{ex}
	As an example let us look again at the decimal basis case. In this case, as already pointed out in \autoref{Z9}, the orbits are $\Gamma_{1}^{9}=\{\overline{1},\overline{2},\overline{4},\overline{5},\overline{7},\overline{8}\}$, $\Gamma_{3}^{9}=\{\overline{3},\overline{6}\}$, and $\Gamma_{9}^{9}=\{\overline{0}\}$. Since $2$ and $5$ are the proper divisors of $10$, the division of any non-repeating decimal number by powers of $2$ or $5$ will again result in a non-repeating decimal number. Moreover, according to \autoref{main1}, the digital root of two numbers whose ratio are integer powers of $2$ or $5$ lies in the same orbit. As a consequence of \autoref{cor1}, the orbit of $9$ is special in the sense that it contains only one element, namely, $\overline{0}$, which means that if the terminating fractional digital root of a number $q$ is in $\Gamma_{9}^9$, every number whose ratio by $q$ yields integer powers of $2$ or $5$ has digital root equal to $9$.
	This example is interestingly enough, however, it might immediately lead oneself to ask if there is any similar conclusions we can draw on for repeating fractionals such as $1/7$ or $3/11$. The answer is ``yes, we can.'' We cannot apply either definitions \ref{root1} or \ref{rootsum2}, but we could turn our attention to repetends to see if any pattern emerges in this case, which may be investigated by directly looking at the repetend. The following section is devoted to this matter. 
	
\end{ex}	

\begin{ex}
	 For a final example, consider the non-decimal base example given by the sequence of terminating fractionals to the base $8$, $\{[25]_8,$ $[12.4]_8,$ $[5.2]_8,$ $[2.5]_8,$ $[1.24]_8,$$\cdots \}$, which is a geometric progression with respect to the base $8$ with ratio $1/2$. Since the digital root of the number $[25]_8$ is $7$, as a consequence of \autoref{cor1}, dividing $[25]_8$ by powers of $2$ will result in non-fractional numbers to the base $8$ with the same digital root $7$.
\end{ex}

\color{black}

\section{Repeating Fractionals and Digital Root}\label{comdizima}

\begin{teo}\label{main2}
	Let $n\in\mathbb{N}_1$, $s\in\mathbb{N}_2$ such that $n/s$ is an irreducible fraction to the base $k$ and $s=k_1^{l_1}\cdots k_m^{l_m}p$, where $ l_1,\cdots, l_m\in\mathbb{N}$, $k_1,\dots,k_m$ are primes in the prime decomposition of $k$, and $p\in\mathbb{N}_2$ such that $\text{gcd} \ (p,k-1)=\text{gcd} \ (p,k)=1$. Then, $n/s\in\mathbb{Q}_{R_k}$ and letting $n_{s,T}$ be its repetend, $\bar{r}_k(n_{s,T})\equiv 0 \mod(k-1)$.
\end{teo}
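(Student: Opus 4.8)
The plan is to separate the statement into its two assertions---that $n/s$ is repeating, and that the repetend has digital root divisible by $k-1$---and to reduce the latter to a single divisibility. To establish $n/s\in\mathbb{Q}_{R_k}$ I would invoke \autoref{teofraciff}: a reduced fraction terminates to the base $k$ exactly when its denominator is a product of primes dividing $k$. Since $p\in\mathbb{N}_2$ and $\gcd(p,k)=1$, the factor $p$ contributes at least one prime that does not divide $k$, so $s=k_1^{l_1}\cdots k_m^{l_m}p$ is not of that form; hence $n/s$ cannot be terminating and must lie in $\mathbb{Q}_{R_k}$.

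Next I would reduce to a purely periodic expansion. Writing $u:=k_1^{l_1}\cdots k_m^{l_m}$, every prime factor of $u$ divides $k$, so $u\mid k^{\rho}$ for all sufficiently large $\rho$; fixing the least such $\rho$ and putting $n':=(k^{\rho}/u)\,n$ gives $k^{\rho}(n/s)=n'/p$. Because the prime factors of $k^{\rho}/u$ all divide $k$ while $\gcd(p,k)=\gcd(n,s)=1$, the fraction $n'/p$ is already reduced with denominator coprime to $k$, so $\bar{k}\in\Gamma_1^{p}$ and its base-$k$ expansion is purely periodic of period $T=\mathrm{ord}(\bar k)$ in $\Gamma_1^p$. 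Multiplication by the integer power $k^{\rho}$ only shifts the radix point, so $n/s$ and $n'/p$ carry the same repetend $n_{s,T}$. I would then pin the repetend down as an explicit integer: writing $n'/p=I+m/p$ with $I=\lfloor n'/p\rfloor$ and $0<m<p$ (here $m\neq0$ since $\gcd(n',p)=1$), the periodic tail satisfies $m/p=[\,0,\overline{d_1\cdots d_T}\,]_k$, and multiplying by $k^{T}$ and subtracting yields $(k^{T}-1)(m/p)=N$, where $N:=[d_1\cdots d_T]_k=n_{s,T}$ (note $N<k^T$, so it is genuinely a $T$-digit block). Since $T=\mathrm{ord}(\bar k)$ forces $p\mid k^{T}-1$, we get $N=m(k^{T}-1)/p\in\mathbb{N}$ and in particular $Np=m(k^{T}-1)$.

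The core of the argument is then the divisibility $(k-1)\mid N$. From $k\equiv1\pmod{k-1}$ we obtain $(k-1)\mid(k^{T}-1)$, hence $(k-1)\mid Np$; since $\gcd(p,k-1)=1$ by hypothesis, \autoref{lema} forces $(k-1)\mid N$, i.e. $N\equiv0\pmod{k-1}$. Finally, as $n_{s,T}=N$ is a natural number its minimum exponent is $0$, so $\bar r_k(N)=r_k(N)$, and the digital root is congruent to its argument modulo $k-1$ by the reasoning of \autoref{lemadkrk} (which rests on $k^{j}\equiv1\pmod{k-1}$); therefore $\bar r_k(n_{s,T})\equiv N\equiv0\pmod{k-1}$, as claimed.

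I expect the main obstacle to be the clean justification that $n'/p$ has a purely periodic expansion of period $T=\mathrm{ord}(\bar k)$ and that its repetend coincides with that of $n/s$, i.e. bridging the existence/uniqueness machinery of \autoref{IBRT} with the classical period structure of base-$k$ expansions of reduced fractions whose denominator is coprime to $k$. Once the repetend is identified with $N=m(k^{T}-1)/p$, the concluding divisibility step through \autoref{lema} is short and robust, and the hypothesis $\gcd(p,k-1)=1$ is used in precisely one place.
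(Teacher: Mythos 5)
Your proposal is correct, and it reaches the conclusion by a genuinely different decomposition than the paper's. The paper never isolates the repetend as an explicit fraction: it keeps the whole mixed expansion of $t=n/s$ from \autoref{IBRT} in play, forms $t''=(k^T-1)k^{\rho_0}(n/s)\in\mathbb{N}$, uses irreducibility to deduce $s\mid k^{\rho_0}(k-1)\sum_{j=0}^{T-1}k^j$, and spends the hypothesis $\gcd(p,k-1)=1$ (together with $\gcd(p,k)=1$) early, to extract $u=\bigl(\sum_{j=0}^{T-1}k^j\bigr)/p\in\mathbb{N}$ and conclude $(k-1)\mid t''$; the regular part is then killed modulo $k-1$ by the factor $k^T-1\equiv 0$, leaving the repetend's digit sum. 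You instead first reduce to the purely periodic fraction $n'/p=k^{\rho}(n/s)$ with $\gcd(n'p,\,\cdot)$ bookkeeping done up front, and pin the repetend down as the explicit integer $N=m(k^T-1)/p$; the conclusion is then a two-line divisibility, with $\gcd(p,k)=1$ used only to get $p\mid k^T-1$ and $\gcd(p,k-1)=1$ used in exactly one place via \autoref{lema}. This localization is a real advantage of your route: it makes visible (as your own example-free accounting suggests, and as $1/3$ in base $10$ confirms) precisely where each coprimality hypothesis bites, whereas in the paper the two conditions are consumed together. The "obstacle" you flag — pure periodicity of $n'/p$ with period $T=\mathrm{ord}(\bar k)$ in $\Gamma_1^p$ — is not a gap relative to the paper, which itself assumes eventual periodicity without proof (it is asserted in the remark following \autoref{IBRT} and invoked as "$\exists T\geq 1$" in the proof); moreover your subtraction step closes most of it anyway, since from $m/p=[0,\overline{d_1\cdots d_T}]_k$ one gets $(k^T-1)(m/p)=N\in\mathbb{N}$ and hence $p\mid k^T-1$ from $\gcd(m,p)=1$ alone, with no need for $T$ to be the minimal period; conversely $p\mid k^T-1$ yields the periodic expansion via $m/p=N\sum_{l\geq 1}k^{-lT}$. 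One small point worth making explicit: your identification of $N$ with $n_{s,T}$ is exact because the least $\rho$ with $k_1^{l_1}\cdots k_m^{l_m}\mid k^{\rho}$ coincides with the preperiod length $\rho_0$; and even if one took a larger $\rho$, the block of $n'/p$ would only be a cyclic rotation of the repetend of $n/s$, which leaves the digit sum, and hence the congruence class of the digital root modulo $k-1$, unchanged, so the final step through \autoref{lemadkrk} goes through verbatim.
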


\begin{proof}
	The fact that $n/s$ is a repeating fractional is a direct consequence of \autoref{teofraciff}. Thus, by the infinite basis representation theorem, \autoref{IBRT}, we have
	\begin{align}\label{q/r}
	t:=\frac{n}{s}&=\sum_{j=0}^{+\infty}a_{p_i-j} k^{p_i-j},\\
	&=\sum_{j=0}^{p_i+\rho_0}a_{p_i-j}k^{p_i-j}+\sum_{j=p_i+\rho_0+1}^{\infty}a_{p_i-j} k^{p_i-j}\\
	&=\sum_{j=0}^{p_i+\rho_0}a_{p_i-j}k^{p_i-j}+\sum_{j=0}^{\infty}a_{-\rho_0-1-j} k^{-\rho_0-1-j}\\
	&=Reg_k(t)+\overline{Rep_k(t)}\label{cyclic}
	\end{align}
	where $p_i\in\mathbb{Z}$, $a_{p_i-j}\in\langle k-1\rangle$ for all $j\in\mathbb{N}$, and $\rho_0$ is the minimum exponent of the regular part of $q$. According to \autoref{repreg}, the second term in the right-hand side of equation \eqref{cyclic} contains the cyclic part of $t$. In addition, $\exists T\geq 1$ such that $\forall j\in\mathbb{N} \ a_{-\rho_0-1-j}=a_{-\rho_0-1-j-T}$. Furthermore, using the Euclidean algorithm we may rewrite the summation variable $j$ in the cyclic term in \eqref{cyclic} as $j=m+lT$, where $0 \leq m \leq T-1$ and $l\geq 0$, obtaining
	\begin{align}\label{t''gen}
	t&=\sum_{j=0}^{p_i+\rho_0}a_{p_i-j}k^{p_i-j}+k^{-\rho_0-1}\sum_{m=0}^{T-1}\sum_{l=0}^{\infty}a_{-\rho_0-1-m-lT} k^{-(m+lT)}\\
	&=\sum_{j=0}^{p_i+\rho_0}a_{p_i-j}k^{p_i-j}+k^{-\rho_0-1}\sum_{m=0}^{T-1}a_{-\rho_0-1-m}k^{-m}\sum_{l=0}^{\infty} k^{-{lT}}\\
	&=\sum_{j=0}^{p_i+\rho_0}a_{p_i-j}k^{p_i-j}+\frac{k^{T-\rho_0-1}}{k^T-1}\sum_{m=0}^{T-1}a_{-\rho_0-1-m}k^{-m}\label{negpowers}
	\end{align}
	
	Since $t$ is a repeating fractional to the base $k$, $\nexists t\in\mathbb{N}$ such that $k^{r}t\in\mathbb{N}$. However, by letting $t':=k^{\rho_0}t$ we eliminate negative powers of $k$ appearing in the sums of \eqref{negpowers}, that is

	\begin{equation}\label{t''''gen}
	t'=k^{\rho_0}t=\sum_{j=0}^{p_i+\rho_0}a_{p_i-j}k^{p_i+\rho_0-j}+\frac{k^{T-1}}{k^T-1}\sum_{m=0}^{T-1}a_{-\rho_0-1-m}k^{-m},
	\end{equation}	 
	The term $k^T-1$ in the denominator of \eqref{t''''gen} is precisely what makes $t'$ a repeating fractional number to the base $k$. Multiplying the last equation by $k^T-1$ we obtain

	\begin{equation}\label{lasteq}
	(k^T-1)t'=\left[(k^T-1)\sum_{j=0}^{p_i+\rho_0}a_{p_i-j}k^{p_i+\rho_0-j}+\sum_{m=0}^{T-1}a_{-\rho_0-1-m}k^{T-1-m}\right]\in\mathbb{N}
	\end{equation}
	
	From equations \eqref{q/r}, \eqref{t''''gen} and \eqref{lasteq}, we obtain the following result
	
	\begin{equation}\label{t''geninf}
	(k^T-1)k^{\rho_0}\frac{n}{s}=(k-1)\left(\sum_{j=0}^{T-1}k^j\right)k^{\rho_0}\frac{n}{s}=:t''\in\mathbb{N}_1
	\end{equation}
	Since $t''\in\mathbb{N}_1$ and $n/s$ is an irreducible fraction, $s$ must divide the term $k^{\rho_0}(k-1)\sum_{j=0}^{T-1}k^j$. By definition of $k$ and $\rho_0$ we have that $k_1^{l_1}\cdots k_m^{l_m}|k^{\rho_0}$, moreover, since $s=k_1^{l_1}\cdots k_m^{l_m}p$ with $\text{gcd} \ (p,k)=\text{gcd} \ (p,k-1)=1$, it follows that $\left(\sum_{j=0}^{T-1}k^j\right)/p=:u\in\mathbb{N}_1$, that is
	\begin{equation}\label{finalt''}
	(k-1)u\left(\frac{k^{\rho_0}}{k_1^{l_1}\cdots k_m^{l_m}}\right)n=t'',
	\end{equation}
	which implies $t''$ is divisible by $k-1$. In terms of congruences, from expressions \eqref{lasteq}, \eqref{t''geninf}, and \eqref{finalt''} we have
	\begin{equation}
	\left[(k^T-1)\sum_{j=0}^{p_i+\rho_0}a_{p_i-j}k^{p_i+\rho_0-j}+\sum_{m=0}^{T-1}a_{-\rho_0-1-m}k^{T-1-m}\right]\equiv 0\ (mod \ k-1)
	\end{equation}
	and consequently
	\begin{align}
	\sum_{m=0}^{T-1}a_{-\rho_0-1-m}k^{T-1-m}&\equiv \sum_{m=0}^{T-1}a_{-\rho_0-1-m}\equiv 0\ (mod \ k-1)\\
	&=\Dt_k(Rep_k(t))\equiv \bar{r}_k(Rep_k(t))\ (mod \ k-1)
	\end{align}
	
\end{proof}

\begin{rem}
	Since $\forall q\in\mathbb{Q}^+_{T_k}$, $k^{\rho_0}q=n\in\mathbb{N}$, where $\rho_0$ is the minimum exponent of $q$, such that $\bar{r}_k(q)=r_k(k^{\rho_0}q)=r_k(n)$, \autoref{main2} is still valid if we replace $n\in\mathbb{N}_1$, by $q\in\mathbb{Q}^+_{T_k}\setminus\{0\}$. Moreover, \autoref{main2} essentially complements \autoref{main1} when considering divisions resulting in repeating fractionals. For instance, in the $k=10$ case, for any irreducible fraction $n/s$ where $s$ is such that $\exists p\in\mathbb{N}: \ p|s \wedge \forall l_1,l_2,l_3\in\mathbb{N} \ p\neq 2^{l_1}3^{l_2}5^{l_3}$, the number $n/s$ is not only a repeating decimal, but its repetend's digital root is always null, that is, it is a multiple of $9$ irregardless of the value of the numerator's digital root $r_k(n)$ in contraposition to the terminating fractionals case treated in \autoref{main1}.  
\end{rem}

\begin{ex}
	For an example of a sequence of repeating decimals, consider the sequence $\{9/{p_n}\}_{n\in\mathbb{N}}$ where $\{p_n\}_{n\in\mathbb{N}}$ is the sequence of all prime numbers starting with $p_1=7$, that is, $\{9/{p_n}\}_{n\in\mathbb{N}}=\{1.\overline{285714}, 0.\overline{81}, 0.\overline{692307}, 0. \overline{5294117647058823}\allowbreak,\cdots \}$, which explicits the first 4 terms of the sequence, which have repetends whose digital root is $9$.
\end{ex}

\begin{ex}
	For the final example, let us consider the following sequence to the base $8$, $\{[25]_8, [5.1\overline{7}]_8, [1.\overline{1463}]_8,\cdots \}$ which is a geometric progression of ratio $1/5$. Since $[5]_8=5$ is coprime to both $[25]_8=21$ and $[10]_8=8$, then every term, except the first, in the preceding sequence is a repeating fractional to the base $8$ whose repetends' digital root is $7$.
\end{ex}
\color{black}

\section{Conclusion}\label{aaa}

The solution of a mathematical problem is usually strongly dependent upon the problem's characterization within a given mathematical structure. By adequately characterizing a problem we may be able to solve it nicely using theorems and techniques from the structure we have been able to model the problem on. Although the Preliminary section served the purpose of laying the foundations for the resolution of the main theorems, \autoref{main1} and \autoref{main2}, it definitely has its own merit. In \autoref{mod-aret}, \autoref{G-action} establishes  a partition of $\mathbb{Z}_n$ into distinct orbits under the action of the group of units contained in $\mathbb{Z}_n$. Moreover, the number of orbits is shown to be precisely the number of divisors of $n$. \autoref{orbiteq} describes the orbits in a useful and simpler form through the sets $\Gamma_{\delta_i}^n$. In \autoref{fin and inf rep}, we have presented both the finite and infinite representation theorems, \autoref{FBRT} and \autoref{IBRT}, which were useful for the main results in \autoref{main1} and \autoref{main2}, respectively. The last preliminary subsection is devoted for basic notions and definitions regarding both the digital sum and the digital root. In \autoref{semdizima} we focus on terminating fractionals by describing an invariance rule for the digital root in such cases, which is accomplished by \autoref{main1}. Essentially if $q$ and $q/r$ are two terminating fractionals to the basis $k$, then their digital roots lie in the same orbit. As an immediate consequence of this theorem, \autoref{cor1}, it follows that, assuming $r$ and $k-1$ to be coprime, if $q$'s digital root equals to $k-1$, then so does $q/r$'s, since the orbit in this case is the singleton $\Gamma_{k-1}^{k-1}=\{\bar{0}\}$. Within this last corollary lies the decimal basis case, with $k-1=9$ as the ``magic number'', which is humorously depicted in Beiler's book and basically everywhere the digital root is mentioned.  Lastly, \autoref{comdizima} deals with the repeating fractionals case, exhibiting an invariance rule, which, in a sense, is stronger than the terminating fractionals case. Roughly speaking, let $n/r$ be an irreducible repeating fractional to the base $k$ with $r$ coprime to $k-1$, then \autoref{main2} asserts that the digital root of $n/r$'s repetend equals to $k-1$ irregardless of the value of $n$ within the condition pre-established in the theorem's assertion.

\section*{Acknowledgements}
The first author would like to thank José Amâncio dos Santos for his valuable comments on previous drafts of the paper.

\bibliographystyle{abbrv}

\end{document}